\newtheorem{theorem}{Theorem}[section]
\newtheorem{proposition}[theorem]{Proposition}
\newtheorem{lemma}[theorem]{Lemma}
\newtheorem{corollary}[theorem]{Corollary}
\newtheorem{definition}[theorem]{Definition}
\newtheorem{remark}[theorem]{Remark}
\newtheorem{example}[theorem]{Example}
\newenvironment{Example}{\begin{example}\em}{\end{example}}
\def\KK{{\mathbb K}}
\def\st{\hbox{s.t.}}
\def\Char{{\rm{char}}}
\def\Ker{{\mathrm{Ker\,}}}
\def\Im{{\mathrm{Im\,}}}
\def\Gr{Gr\"obner}
\def\lm{{\mathrm{lm}}}
\def\LM{{\mathrm{LM}}}
\def\HS{{\mathrm{HS}}}
\def\Tor{{\mathrm{Tor}}}
\newcommand{\singular}{{\sc Singular}\ }
\newcommand{\bergman}{{\sc Bergman}\ }
\def\sto{\ {\overset{*}{\to}\ }}
\def\lsto{\ {\overset{*}{\to}_L\, }}
\def\gd{{\mathrm{gl.dim}}}
\begin{document}

\title[Noncommutative algebras, context-free grammars and $\ldots$]
{Noncommutative algebras, context-free grammars and algebraic Hilbert series}

\author[R. La Scala]{Roberto La Scala$^*$}

\author[D. Piontkovski]{Dmitri Piontkovski$^{**}$}

\author[S.K. Tiwari]{Sharwan K. Tiwari$^{***}$}

\address{$^*$ Dipartimento di Matematica,
Universit\`a di Bari, Via Orabona 4, 70125 Bari, Italy}
\email{roberto.lascala@uniba.it}

\address{$^{**}$ Department of Mathematics for Economics,
National Research University Higher School of Economics,
Myasnitskaya str. 20, Moscow 101990, Russia}
\email{dpiontkovski@hse.ru}

\address{$^{***}$ Scientific Analysis Group, Defence Research
\& Development Organization, Metcalfe House, Delhi-110054, India}
\email{shrawant@gmail.com}

\thanks{We acknowledge the support of the University of Bari and
of the visiting program of Istituto Nazionale di Alta Matematica. The research
of the author D.P. is supported by RFBR project 18-01-00908.}

\subjclass[2010] {Primary 05A15. Secondary 16E05, 68Q42}

\maketitle

\begin{abstract}
In this paper we introduce a class of noncommutative (finitely generated)
monomial algebras whose Hilbert series are algebraic functions.
We use the concept of graded homology and the theory of unambiguous
context-free grammars for this purpose. We also provide examples
of finitely presented graded algebras whose corresponding leading
monomial algebras belong to the proposed class and hence possess
algebraic Hilbert series.
\end{abstract}

\section{Introduction}

There are few tools in mathematics which have the same applicability,
ubiquity and beauty as the generating functions. Their study is essential,
for instance, for algebra, combinatorics and theoretical computer science.
Indeed, it is generally easier to determine the generating function
of a numerical sequence than its closed formula and by means of such
function one can obtain important data as the asymptotics of the sequence.

Undoubtedly, the most useful generating functions in Algebra are
the Hilbert series (or growth series) of graded and filtered structures.
Among their applications, we mention the possibility to bound Krull
dimensions (via GK-dimensions) and homological dimensions, as well as
their use to characterize the existence of Polynomial Identities,
Noetherianity, Koszulness and other remarkable properties of algebras
(see, for instance \cite{KL,Uf2}). It was Hilbert himself who proved that
(finitely generated) commutative algebras have always rational series,
but the study of Hilbert series of noncommutative structures was started
much later and their behaviour was proved to be wild.

A first result in 1972 is due to Govorov \cite{Go1} who proved that
finitely presented monomial algebras have rational Hilbert series.
He also made the conjecture that all finitely presented graded algebras
have rational series, but a couple of counterexamples were found
by Shearer \cite{Sh} in 1980 and Kobayashi \cite{Ko} in 1981.
We remark that the corresponding non-rational Hilbert series were algebraic
functions, that is, roots of polynomials with coefficients in the rational
function field. At the same time, classes of universal enveloping algebras
with trascendental Hilbert series were also discovered (see, for instance,
\cite{Uf2}). Note also that finitely presented algebras whose
growth is intermediate have necessarily transcendental series. Examples
of such algebras have been recently introduced by Ko\c{c}ak \cite{Kc1,Kc2}.

Another important class of algebras having rational Hilbert series
is the class of {\em automaton algebras} which was introduced by Ufnarovski
in \cite{Uf1} (see also \cite{Ma}). In the theory of formal languages
of theoretical computer science, the regular languages are the ones
that are recognized by finite-state automata and it is well-known that
such languages have rational generating functions (see, for instance, \cite{KS}).
Accordingly, the set of normal words of an automaton algebra is by definition
a regular language. One proves easily that automaton monomial algebras
include finitely presented ones and hence the Govorov's rationality
theorem can be seen as a consequence of a general result from the theory
of formal languages.

Recently, in the papers \cite{LS2,LST} we have proposed algorithms 
for the the computation of the rational Hilbert series of an automaton
algebra which are also implemented in \singular \cite{DGPS}.
These procedures are based on the iteration of a colon (right) ideal
operation and hence they generalize previous methods (see, for instance,
\cite{GP}) for the computation of Hilbert series of commutative algebras.
At the same time, these algorithms for automaton algebras can be
viewed as an application of the Myhill-Nerode theorem for regular languages
(see \cite{DLV}) in the context of noncommutative algebras.

Another fundamental approach to the computation of Hilbert series
consists in determining such series by means of the homology
of a graded algebra. In the commutative case, this essentially
corresponds to obtain the Hilbert series via the computation
of syzygies which was, in fact, the original method of Hilbert.
In the noncommutative case, the homology of monomial algebras
was initially studied by Backelin \cite{Ba1} and later extended
by Anick \cite{An} to general (associative) algebras. An implementation
of effective methods to compute these combinatorial structures can
be found in \cite{Ba2,CPU}. For alternative methods to compute graded
homology see also \cite{LS1}.
The minimal generating sets defining the homology of a monomial algebra
are parametrized by (possibly infinite) sets of words which are called ``chains''.
Therefore, a monomial algebra is associated to a sequence of chain languages
in a natural way and the corresponding Hilbert series can be obtained
by the generating functions of such languages whenever these functions
can be computed.

We introduce homology in Section \ref{homsec} and in Section \ref{govsec}
we prove that automaton algebras have regular chain languages by using
a result of Govorov \cite{Go2}. Therefore, the rational Hilbert series
of such algebras can also be obtained by the generating functions of
their chain languages which are computed by solving a linear system
over the rational function field according to Myhill-Nerode theorem.

With the aim of generalizing this result to algebraic Hilbert series,
we introduce in Section \ref{CFsec} and Section \ref{CSsec} some
fundamental concepts of theoretical computer science such as unambiguous
context-free grammars and languages. By means of a key result of
Chomsky-Sch\"utzenberger for such languages (see, for instance, \cite{KS})
it is well-known that a corresponding generating function is a root
of a univariate polynomial obtained by eliminating over a system
of algebraic equations corresponding to the rules of the grammar
which generates the unambiguous language.

In contrast with the automaton case, we see in Example \ref{countex}
that there are monomial algebras whose chain languages are generally not
context-free even if they are defined by an unambiguous context-free set of relations.
For this reason, we introduce in Section \ref{unsec}, the notion of {\em (homologically)
unambiguous algebra} which is a monomial algebra whose chain languages are all
unambiguous. In the case of finite global dimension, we obtain that such algebras
have computable algebraic Hilbert series. For the elimination process one may use
the computation of a (commutative) \Gr\ basis with respect to an elimination monomial
ordering. In the same section, we prove that there are large classes of unambiguous
algebras. We also show that some of them have infinite global dimension
but computable algebraic series.

In Section \ref{monsec}, we illustrate some examples of unambiguous
monomial algebras with detailed computations of their algebraic Hilbert
series. In Section \ref{fpsec}, we also give some examples of finitely presented
graded algebras whose corresponding leading monomial algebras are homologically
unambiguous. Then, we compute the algebraic Hilbert series of these algebras.
Our main example is a quadratic monoid algebra such that the minimal set of
relations of the associated monomial algebra is a language related to the Dyck
words of balanced brackets.

Finally, in Section \ref{conc} we conclude and propose some suggestions
for further developments of the theory of non-rational Hilbert series.


\section{Monomial algebras and their homology}
\label{homsec}

Let $\KK$ be any field and let $X = \{x_1,\ldots,x_n\}$ be any finite set.
We denote by $X^*$ the free monoid generated by $X$, that is, the elements
of $X^*$ are words on the alphabet $X$.
Consider $F = \KK\langle X\rangle$ the free associative algebra generated
by $X$, that is, $F$ is the vector space which has $X^*$ as a $\KK$-linear
basis. The elements of $X^*$ are also called {\em noncommutative monomials}
and the elements of $F$ are called {\em noncommutative polynomials}
in the variables $x_i\in X$. If $w = x_{i_1}\cdots x_{i_d}\in X^*$,
we put $|w| = d$, that is, $|w|$ is the {\em length of the word $w$}.
The {\em standard grading} $F = \bigoplus_{d\geq 0} F_d$ of the algebra $F$
is hence obtained by defining $F_d\subset F$ as the subspace generated by the set
$\{w\in X^*\mid |w| = d\}$. Any finitely generated algebra with $n$ generators
is clearly isomorphic to a quotient algebra $F/I$ where $I$ is a two-sided
ideal of $F$. The ideal $I$ is called a {\em monomial ideal} if $I$ is generated
by a subset $L\subset X^*$. In this case, we say that $L$ is a {\em monomial
basis of $I$}. A monomial basis $L\subset I$ is called {\em minimal}
if $L$ is an antichain of $X^*$, that is, $v$ is not a subword of $w$,
for all $v,w\in L, v\neq w$. It is well-known that minimal monomial bases
are uniquely defined for all monomial ideals. Owing to non-Noetherianity
of the free associative algebra $F$, note that such bases are generally
infinite sets. The (finitely generated) algebra $A = F/I$ is called a
{\em monomial algebra} if $I$ is a monomial ideal. To simplify notation
for the quotient algebra $A$, we identify words with their cosets
and hence a $\KK$-linear basis of $A$ is given by the {\em set of normal words}
$N(I) = X^*\setminus I$. Let $L_1\subset X^*$ be the minimal monomial basis
of $I$ and assume that $L_1\subset (X^+)^2$ where $X^+ = \{w\in X^*\mid |w|\geq 1\}$.
Then $L_0 = X$ is a minimal generating set of the monomial algebra $A$.
In other words, the sets $L_0,L_1$ define a minimal presentation $A =
\langle L_0\mid L_1 \rangle$. The elements of $L_0$ and $L_1$ are respectively
called the {\em 0-chains} and {\em 1-chains of the monomial algebra $A$}.
According to theoretical computer science, any subset $L\subset X^*$
is called a {\em (formal) language}. Then, the subsets $L_0,L_1\subset X^*$
are languages which are uniquely associated to a monomial algebra $A$.
We show now that there are other languages which correspond uniquely to $A$.
Precisely, we consider the graded homology of a monomial algebra.

Let $A = F/I$ be a monomial algebra such that $L_1\subset (X^+)^2$ is
the 1-chain language of $A$. We consider the subspace $\KK L_0\subset F$
that is generated by $L_0 = X$ and the (vector space) tensor product
$\KK L_0\otimes A$ over the field $\KK$. We can clearly endow $\KK L_0\otimes A$
with the structure of right $A$-module and in fact this is a free module with
basis $\{x_i\otimes 1\mid x_i\in L_0\}$. We can consider hence the right
$A$-module homomorphism
\[
\varphi_0:\KK L_0\otimes A\to A, x_i\otimes 1\mapsto x_i.
\]
Note that $\varphi_0$ is a homogeneous map with respect to the standard grading
of the tensor product $\KK L_0\otimes A$, that is, $|x_i\otimes w| = |x_i| + |w| =
1 + |w|$, for all $x_i\in L_0$ and $w\in N(I)$. The image of this map is clearly
the right (in fact two-sided) ideal $\Im\varphi_0 = \langle x_1,\ldots,x_n \rangle
\subset A$ and therefore we have the graded (right $A$-module) exact sequence
\[
L_0\otimes A\to A\to \KK\to 0.
\]
The {\em graded augmentation map} $A\to \KK$ is clearly defined by composing
the canonical surjection $A\to A/\Im\varphi_0$ with the canonical
isomorphism $A/\langle x_1,\ldots,x_n \rangle\to \KK$.
Recall that the variables $x_i\in L_0$ are called the 0-chains of $A$.
The minimal basis for the (monomial) right $A$-module $\Ker\varphi_0$ is immediately
obtained from the 1-chain language $L_1$ as the set
\[
\bar{L}_1 = \{x_i\otimes t\mid w = x_i t\in L_1, x_i\in L_0, t\in N(I)\}.
\]
Consider the free right $A$-module $\KK L_1\otimes A$ with basis
$\{w\otimes 1\mid w\in L_1\}$. By means of the right $A$-module homomorphism
\[
\varphi_1:\KK L_1\otimes A\to \KK L_0\otimes A, w\otimes 1\mapsto x_i\otimes t
\]
one obtains therefore the exact sequence
\[
\KK L_1\otimes A\to \KK L_0\otimes A\to A\to \KK\to 0.
\]
Observe that $\varphi_1$ is also a homogeneous map and hence we have a graded
exact sequence. This sequence can be further extended in the following way.

Let $w,w'\in L_1$ (possibly $w = w'$) such that $w u = v w'$ with
$u,v\in X^*$ and $|u| < |w'|, |v|\geq 1$. If $w = x_i t$ where $x_i\in L_0, t\in N(I)$,
we have that $w'$ is a subword of $t u$, that is, $t u\in I$ and therefore
\[
\varphi_1(w\otimes u) = x_i\otimes t u = 0.
\]
In other words, the element $w\otimes u\in \KK L_1\otimes A$ belongs to
$\Ker\varphi_1$. One proves easily that
\[
\bar{L}'_2 = \{w\otimes u\mid w u = v w', w,w'\in L_1,
u,v\in X^*, |u| < |w'|, |v|\geq 1\}
\]
is a basis of the (monomial) right submodule $\Ker\varphi_1\subset \KK L_1\otimes A$.
Observe that $\bar{L}'_2$ is generally not the minimal basis of $\Ker\varphi_1$
because we may have two elements $w\otimes u, w\otimes u'$ such that
$u v = u'$, for some $v\in X^*$. Then, consider the minimal basis $\bar{L}_2$
obtained from $\bar{L}'_2$ by discarding redundant generators and define
the language
\[
L_2 = \{w u\mid w\otimes u\in \bar{L}_2\}.
\]
Clearly, we may define the homogeneous right $A$-module homomorphism
\[
\varphi_2:\KK L_2\otimes A\to \KK L_1\otimes A,
w u\otimes 1\mapsto w\otimes u
\]
and hence the graded exact sequence
\[
\KK L_2\otimes A\to \KK L_1\otimes A\to \KK L_0\otimes A\to A\to \KK\to 0.
\]
The elements of $L_2$ are called the {\em 2-chains of $A$}. One has possibly
$L_2 = \emptyset$ and hence $\KK L_2\otimes A = 0$. In this case,
the minimal monomial basis $L_1$ is called {\em combinatorially free}.

If $L_2\neq \emptyset$, we can continue in this process of defining a minimal
(graded) free right $A$-module resolution of the base field $\KK$
\begin{equation}
\label{resol}
\ldots \to \KK L_{i+1}\otimes A\to \KK L_i\otimes A\to \ldots
\to \KK L_0\otimes A\to A\to \KK\to 0.
\end{equation}
The general map $\varphi_{i+1}:\KK L_{i+1}\otimes A\to \KK L_i\otimes A$
of this resolution can be defined in the following way. We look for
a minimal monomial basis of the right submodule $\Ker\varphi_i\subset
\KK L_i\otimes A$. Let $w = s t\in L_i$ where $s\in L_{i-1}$ and $t\in N(I)$,
that is, $\varphi_i(w) = s\otimes t$. We call $s$ the {\em prefix $(i-1)$-chain}
and $t$ the {\em tail of the $i$-chain $w$}. Let $w'\in L_1$ and assume that
$w u = v w'$ with $u,v\in X^*$ and $|u| < |w'|, |v|\geq |s|$. We have clearly
that $w'$ is a subword of $t u$, that is, $t u\in I$ and hence
\[
\varphi_i(w\otimes u) = s\otimes t u = 0.
\]
In other words, the element $w\otimes u\in \KK L_i\otimes A$ belongs to
$\Ker\varphi_i$. Indeed, one has that
\[
\bar{L}'_{i+1} = \{w\otimes u\mid w u = v w', w = s t\in L_i, w'\in L_1,
u,v\in X^*, |u| < |w'|, |v|\geq |s|\}
\]
is a basis of the (monomial) right submodule $\Ker\varphi_i\subset \KK L_i\otimes A$.
We consider then the minimal basis $\bar{L}_{i+1}$ obtained from $\bar{L}'_{i+1}$
and we define the language
\[
L_{i+1} = \{w u\mid w\otimes u\in \bar{L}_{i+1}\}.
\]
We have obtained therefore the required homogeneous right $A$-module homomorphism
\[
\varphi_{i+1}:\KK L_{i+1}\otimes A\to \KK L_i\otimes A,
w u\otimes 1\mapsto w\otimes u.
\]
Observe that the elements $w u$ such that $w\otimes u\in \bar{L}'_{i+1}$
are sometimes called {\em $(i+1)$-prechains of $A$}. The set $L_{i+1}$ is called
the {\em $(i+1)$-chain language of $A$}. Since $\bar{L}_{i+1}$ is the unique
minimal monomial basis of $\Ker\varphi_i$, we have that $L_{i+1}$ is uniquely
associated to $A$, for all $i$. Then, we introduce the notation $L_i(A)$
for the $i$-chain language of the monomial algebra $A$.

The resolution (\ref{resol}) is called the {\em (graded) homology of the monomial
algebra $A$}. In fact, by putting $L_{-1} = 1$, that is, $\KK L_{-1} = \KK$,
we have that $\Tor_i^A(\KK,\KK)$ and $\KK L_{i-1}$ are isomorphic graded vector
spaces, for $i\geq 0$. For a reference to the concept of $\Tor$ functor,
we refer to \cite{Uf2}.  In general, the homology has not a finite length.
If, instead, one has a finite resolution
\[
0\to \KK L_k\otimes A\to\ldots\to \KK L_1\otimes A\to \KK L_0\otimes A
\to A \to \KK\to 0
\]
we call the integer $\gd(A) = k + 1$ the {\em global (homological) dimension of $A$}.
Both for finite or infinite global dimension, observe that if $L_1$ is a finite
language then all other languages $L_i$ are also finite. We will see
in Section \ref{govsec} that this property can also be extended to some good
class of infinite languages.


\section{Context-free grammars}
\label{CFsec}

A fundamental way to define infinite languages is via recursion. 
In theoretical computer science, this is formalized by the notion 
of (formal) grammar. Since in this paper we are mainly interested
in computing algebraic generating functions of languages, we restrict
ourselves to consider context-free grammars (see Theorem \ref{CSth}).
Consider \cite{DLV,HMU,KS} as general references for the results contained
in this section.

\begin{definition}
A {\em context-free grammar}, briefly a {\em cf-grammar}, is a quadruple
$G = (V, X, P, S)$, where $V, X$ are finite sets, $P$ is a finite subset
of $V\times (V\cup X)^*$ and $S\in V$. An element $(A,\alpha)\in P$ is usually
denoted as $A\to \alpha$ where $A\in V$ and $\alpha$ is a word on the
alphabet $V\cup X$. An element $A\to \alpha$ is called a {\em production}
or a {\em rule} of the grammar $G$. The elements of $V$ are called
{\em variables} or {\em nonterminals} and the elements of $X$ are called
{\em terminals}. The distinguished variable $S\in V$ is called the
{\em start variable}.  If there are different productions
$A\to\alpha_1,\ldots,A\to\alpha_k$ for the same variable $A$,
one uses the compact notation $A\to \alpha_1\mid\ldots\mid\alpha_k$.
\end{definition}

For the convenience of readers, who may not be familiar with theoretical
computer science, we provide immediately an example to let them enter into
the idea behind the formalism. As in the previous section,
let $X = \{x_1,\ldots,x_n\}$. If $w = x_{i_1}\cdots x_{i_d}\in X^*$ ($d\geq 0$),
we denote $w^R = x_{i_d}\cdots x_{i_1}$. A word such that $w = w^R$
is called a {\em palindrome}. Then, consider the language of all palindromes
$L = \{w\in X^*\mid w = w^R\}$. It is clear that $1, x_i\in L$ and
$x_i L x_i\subset L$, for any $x_i\in L$. In fact, the language $L$
has the following recursive structure
\[
L = \{1\}\cup\{x_1\}\cup\ldots\cup\{x_n\}\cup x_1 L x_1\cup\ldots\cup x_n L x_n.
\]
Then, if $V = \{S\}$ and
\[
P = \{S\to 1\mid x_1\mid\ldots\mid x_n\mid x_1 S x_1\mid\ldots\mid x_n S x_n\},
\]
the language $L$ may be represented as the cf-grammar $G = (V, X, P, S)$.
Actually, this is a very simple example where recursion involves a single language
and hence the grammar needs only the start variable. In general, recursion may be
more involved relating different languages, that is, $V$ may contain many variables.

Note that beside the understanding of a cf-grammar as a recursion process
defining a language, there exists another viewpoint which is predominant
in computer science. This consists in considering a production
$A\to \alpha$ as a rewriting rule where the variable $A$ is rewritten as
the word $\alpha$. Starting with the variable $S$, one iterates this
rewriting process for all variables (nonterminals) occurring in the current
word untill we obtain a word whose letters are all terminals. This way
to obtain the language from the cf-grammar is called ``derivation''.
Consider, for instance, the language of palindromes
$L = \{w\in X^*\mid w = w^R\}$. As an example of derivation, we can rewrite
$S$ by the word $x_1Sx_1$ ($S\to x_1Sx_1$) which can be rewritten to
$x_1x_2Sx_2x_1$ ($S\to x_2Sx_2$) and finally to $x_1x_2x_1x_2x_1$ ($S\to x_1$)
which is a palindrome. One can obtain all words of the language $L$
by means of such derivations.

\begin{definition}
Let $G = (V, X, P, S)$ be a cf-grammar and let $v,w\in (V\cup X)^*$.
We denote $v\to w$ if $v = p A q, w = p \alpha q$, where $A\to\alpha\in P$
and $p,q\in (V\cup X)^*$. A {\em derivation from $v$ to $w$ of length $k$}
$(k\geq 1)$ is a sequence $(v_0,v_1,\ldots,v_k)$ of words of $(V\cup X)^*$
such that $v_{i-1}\to v_i$ and $v = v_0, w = v_k$. In this case, we write
$v\overset{k}{\to} w$. Moreover, we put $v\overset{0}{\to} v$.
We denote $v\sto w$ if $v\overset{k}{\to} w$, for some $k\geq 0$.
The {\em cf-language generated by $G$} is by definition
\[
L(G) = \{w\in X^*\mid S\sto w\}.
\]
\end{definition}

Among derivations, there are some canonical ones that are sufficient
to generate a cf-language.

\begin{definition}
Let $v\sto w$ be a derivation, that is, $v = v_0\to v_1\to \ldots \to v_k = w$.
If, for any derivation step $v_i\to v_{i+1}$, the variable in the word $v_i$
that is rewritten by a production is exactly the leftmost occurrence
of a variable in $v_i$, then we call $v\sto w$ a {\em leftmost derivation}
and we denote it as $v\lsto w$.
\end{definition}

To provide an example of a leftmost derivation, consider the Dyck grammar
$G = (V,X,P,S)$ where $V = \{S\}, X = \{x,y\}, P = \{S\to 1\mid x S y S\}$.
If one understands the terminals $x,y$ as a left and a right bracket,
we have that $L(G)$ is the language of words of balanced brackets.
One has the following leftmost derivation
\[
S\to x S y S \to x y S \to x y x S y S \to x y x y S \to x y x y.
\]

In Section \ref{CSsec}, we will see that a fundamental property which allows to count
the words of a cf-language is the following one.

\begin{definition}
A cf-grammar $G$ is called {\em unambiguous} if for any word $w\in L(G)$, there is
a unique leftmost derivation $S\lsto w$. In other words, one has a bijection
between the cf-language $L(G)$ and the set of leftmost derivations
$\{S\lsto w\mid w\in L(G)\}$.
\end{definition}

Note that different cf-grammars may define the same cf-language. So, a cf-language
may be defined both from an ambiguous and an unambiguous cf-grammar. In fact,
there are cf-languages where all corresponding cf-grammars are ambiguous.
These cf-languages are called {\em inherently ambiguous}. All other cf-languages
are called {\em unambiguous}.

In the class of unambiguous cf-languages we find the regular languages that
can be defined in terms of cf-grammars in the following way.

\begin{definition}
\label{rgram}
A cf-grammar $G = (V, X, P, S)$ is called {\em regular} or {\em right linear}
if all productions are of type $A\to 1$ or $A\to x_i B$, where $A,B\in V,
x_i\in X$. The corresponding cf-language $L(G)$ is called a {\em regular
language}. 
\end{definition}

By the right linearity of productions, regular languages are clearly unambiguous
ones. Moreover, finite languages are regular ones. The regular languages can be
characterized in different ways. An important characterization is provided
by the Myhill-Nerode theorem. To state this theorem, we have to introduce
the following notions.

\begin{definition}
Let $X = \{x_1,\ldots,x_n\}$ and consider $L\subset X^*$ and $w\in X^*$.
We define the {\em right quotient of the language $L$ by the word $w$}
as the language
\[
w^{-1} L = \{v\in X^*\mid w v\in L\}.
\]
Moreover, we put $Q(L) = \{w^{-1} L\mid w\in X^*\}$.
\end{definition}

\begin{theorem}[Myhill-Nerode]
The language $L$ is regular if and only if $Q(L)$ is a finite set.
\end{theorem}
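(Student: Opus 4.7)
The plan is to prove both directions by relating right-linear grammars to a canonical construction on right quotients, essentially a grammar-theoretic version of the classical subset construction from automata theory.

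For the \emph{if} direction, assuming $Q(L)$ is finite, I would construct an explicit right-linear grammar whose nonterminals are the equivalence classes in $Q(L)$. Concretely, set $V = Q(L)$ with start variable corresponding to $1^{-1}L = L$, and introduce the productions $w^{-1}L \to x_i\,(wx_i)^{-1}L$ for each $x_i \in X$, together with $w^{-1}L \to 1$ whenever $w \in L$. The first step is to check that these productions depend only on the quotient class: if $w^{-1}L = (w')^{-1}L$, a short calculation gives $(wx_i)^{-1}L = (w'x_i)^{-1}L$ and $w \in L \iff w' \in L$, so the rules are well-defined. The second step is an induction on derivation length establishing $w^{-1}L \sto v$ if and only if $wv \in L$; specialized to the start variable this yields $L(G) = L$.

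For the \emph{only if} direction, assume $L = L(G)$ for some right-linear grammar $G = (V, X, P, S)$. For any $w \in X^*$, I would let $R(w) \subset V$ be the set of nonterminals $A$ such that $S \sto wA$ via a leftmost derivation using only productions of the form $B \to x_j C$. Since every sentential form in such a derivation contains exactly one nonterminal, placed at the right end, an induction on $|w|$ yields
\[
w^{-1}L = \bigcup_{A \in R(w)} \{v \in X^* \mid A \sto v\}.
\]
As $R(w)$ is a subset of the finite set $V$, there are at most $2^{|V|}$ possible values of $R(w)$, and hence at most $2^{|V|}$ distinct quotients, so $Q(L)$ is finite.

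I expect the main subtlety to be the potential nondeterminism of the grammar in the second direction: several leftmost derivations $S \sto wA$ and $S \sto wA'$ with $A \neq A'$ may coexist, so one must track the full reachable set $R(w)$ rather than a single nonterminal. Once this bookkeeping is in place, both implications reduce to routine inductions on word length, while well-definedness of the productions in the first direction is purely formal.
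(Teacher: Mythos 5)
Your argument is correct, but there is nothing in the paper to compare it against: the Myhill--Nerode theorem is stated there as a classical background result (with references such as \cite{DLV,KS}) and no proof is given. Your proof is essentially the standard textbook one, adapted correctly to the paper's grammar-based definition of regularity (productions only of the forms $A\to 1$ and $A\to x_iB$). In the ``if'' direction, taking $V=Q(L)$ with productions $w^{-1}L\to x_i\,(wx_i)^{-1}L$ and $w^{-1}L\to 1$ when $1\in w^{-1}L$ is exactly the canonical (minimal) automaton/grammar construction, and it is in fact the same construction the paper encodes procedurally in Algorithm~\ref{orbit}; your well-definedness check via $x_i^{-1}(w^{-1}L)=(wx_i)^{-1}L$ and the induction giving $w^{-1}L\sto v \iff wv\in L$ are the right ingredients. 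In the ``only if'' direction you correctly identify the one genuine subtlety, namely that the paper's regular grammars may be nondeterministic, and you handle it by tracking the reachable set $R(w)\subseteq V$, which gives $w^{-1}L=\bigcup_{A\in R(w)}\{v\in X^*\mid A\sto v\}$ and hence at most $2^{|V|}$ distinct quotients; this is the subset-construction bookkeeping and it is sound because in a right-linear grammar every sentential form has a single nonterminal at the right end, so a derivation of $wv$ splits cleanly after $|w|$ steps. The only cosmetic remark is that this gives the bound $\#Q(L)\le 2^{|V|}$ rather than the sharper statement (quotients correspond to states of the minimal deterministic recognizer), but finiteness is all the theorem asserts, so the proof is complete as written.
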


Note that $w^{-1} v^{-1} L = (v w)^{-1} L$, for all $v,w\in X^*$
and $1^{-1} L = L$. Then, $Q(L)$ is the smallest set of languages
containing $L$ such that $x_i^{-1} L'\in O(L)$, for all $L'\in Q(L)$
and for any $x_i\in X$. Moreover, one has clearly that
$L = x_1^{-1} L \cup\ldots\cup x_n^{-1} L\cup C$, where
\[
C =
\left\{
\begin{array}{cl}
\{1\} & \mbox{if}\ 1\in L, \\
\emptyset & \mbox{otherwise}.
\end{array}
\right.
\]
The above theorem provides hence a procedure to construct a regular grammar
generating a given regular language. Such grammar is minimal with respect
to the number of productions.

\suppressfloats[b]
\floatname{algorithm}{Algorithm}
\begin{algorithm}\caption{}
\label{orbit}
\begin{algorithmic}[0]
\State \text{Input:} $L$, a regular language.
\State \text{Output:} $G$, a (minimal) regular grammar \st\ $L(G) = L$.
\State $N:= \{L\}$;
\State $Q:= \{L\}$;
\State $V:= \{A_1\}$;
\While{$N\neq\emptyset$}
\State {{\bf choose}} $L'\in N$;
\State $N:= N\setminus\{L'\}$;
\State $k:= $ the position of $L'$ in $Q$;
\If{$1\in L'$}
\State $P:= P\cup\{A_k\to 1\}$;
\EndIf;
\ForAll{$1\leq i\leq n$}
\State $L'':= x_i^{-1} L'$;
\If{$L''\notin Q$}
\State $N:= N\cup\{L''\}$;
\State $Q:= Q\cup\{L''\}$;
\State $V:= V\cup\{A_{\#Q}\}$;
\EndIf;
\State $l:= $ the position of $L''$ in $Q$;
\State $P:= P\cup\{A_k\to x_i A_l\}$;
\EndFor;
\EndWhile;
\State \Return $G = (V,X,P,A_1)$.
\end{algorithmic}
\end{algorithm}

Note that in the above procedure the index $\#Q$ of the variable $A_{\#Q}$
denotes the cardinality of the current set $Q$.

As an example of application of this procedure, let $X = \{x,y\}$ and
consider the regular language $L_1 = \{x^my^n\mid m,n\geq 0\}\subset X^*$.
We have clearly that $1\in L_1$ and $x^{-1} L_1 = L_1, y^{-1} L_1 = L_2$,
where $L_2 = \{y^n\geq 0\}$.
Moreover, we have that $1\in L_2$ and $x^{-1} L_2 = L_3, y^{-1} L_2 = L_2$
where $L_3 = \emptyset$.  Finally, one has that $x^{-1} L_3 = y^{-1} L_3 = L_3$.
We conclude that $Q(L) = \{L_1,L_2,L_3\}$ and the regular grammar
$G = (V, X, P, S)$ corresponding to $L_1$ is defined as
$V = \{A_1,A_2,A_3\}, S = A_1$ and
\[
P = \{A_1\to 1\mid x A_1\mid y A_2,
A_2\to 1\mid x A_3\mid y A_2,
A_3\to x A_3\mid y A_3\}.
\]
Observe that if we apply the above procedure to a non-regular language
$L$, one obtains an infinite set of right quotients $Q(L)$, that is,
an ``infinite regular grammar''.

There are many operations one can consider among languages. The regular languages
are closed with respect to almost all of them but general cf-languages are closed
only for some of them.

\begin{definition}
Given two languages $L,L'\subset X^*$, we consider the set-theoretic union
$L\cup L'$ and the product $L L' = \{w w'\mid w\in L,w'\in L'\}$.
Moreover, one defines the {\em star operation} $L^* = \bigcup_{d\geq 0} L^d$
where $L^0 = \{1\}$ and $L^d = L L^{d-1}$, for any $d\geq 1$. The union,
the product and the star operation are called the {\em regular operations
over the languages}. One also considers the set-theoretic intersection
$L\cap L'$ and the complement $L^c = \{w\in X^*\mid w\notin L\}$.
\end{definition}

\begin{proposition}
\label{closure}
The regular languages and cf-languages are closed under the regular operations.
The regular languages are also closed under intersection and complement.
Moreover, if $L$ is a cf-language and $L'$ is a regular one then
$L\cap L'$ is a cf-language.
\end{proposition}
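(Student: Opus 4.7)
The statement collects several standard closure properties from formal language theory, so the plan is to handle each claim by an explicit construction, leaving verification of the induced language identities to routine bookkeeping.

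For closure of context-free languages under the regular operations, I would start from two cf-grammars $G_i = (V_i, X, P_i, S_i)$, $i=1,2$, with disjoint variable sets, and build a new cf-grammar with a fresh start variable $S$. Adding the productions $S \to S_1 \mid S_2$ yields a grammar for $L(G_1) \cup L(G_2)$; adding $S \to S_1 S_2$ yields one for $L(G_1) L(G_2)$; and adding $S \to 1 \mid S_1 S$ yields one for $L(G_1)^*$. For closure of regular languages under the regular operations, the same idea works but one must restore the right-linear form of Definition~\ref{rgram}: in the concatenation construction, every rule $A \to 1$ of $G_1$ is replaced by $A \to S_2$ and then, if necessary, chain-eliminated, and similarly in the star case. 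Alternatively, and cleanly for the union and intersection cases, one can invoke the Myhill--Nerode theorem via the identities $w^{-1}(L\cup L') = w^{-1}L \cup w^{-1}L'$ and $w^{-1}(L\cap L') = w^{-1}L \cap w^{-1}L'$, which yield $\#Q(L \circ L') \le \#Q(L)\cdot\#Q(L')$ for $\circ \in \{\cup,\cap\}$, and thus finiteness of the right quotient set.

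For closure of regular languages under complement, I would again use Myhill--Nerode: since $w^{-1}(L^c) = (w^{-1}L)^c$, the map $L' \mapsto (L')^c$ induces a bijection between $Q(L)$ and $Q(L^c)$, so finiteness of one forces finiteness of the other. This together with the intersection case above settles the second sentence.

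The genuinely new construction is the last claim: given a cf-grammar $G=(V,X,P,S)$ for $L$ and a regular grammar $G'=(V',X,P',S')$ for $L'$, I would produce a cf-grammar whose variables are triples $\langle A, p, q\rangle$ with $A \in V$ and $p,q \in V'$, intended to generate those words $w$ such that $A \sto w$ in $G$ and $p \sto w q$ in $G'$. The rules are obtained by splicing: a production $A \to \alpha_0 B_1 \alpha_1 \cdots B_k \alpha_k$ of $P$ (with the $\alpha_i \in X^*$ and the $B_j\in V$) gives rise to all productions
\[
\langle A, p_0, p_{k+1}\rangle \to \hat\alpha_0\, \langle B_1, q_1, p_1\rangle\, \hat\alpha_1 \cdots \langle B_k, q_k, p_k\rangle\, \hat\alpha_k,
\]
where the states $p_i, q_i \in V'$ are chosen so that $p_{i-1} \overset{*}{\to}_{G'} \alpha_{i-1} q_i$ holds using only the right-linear rules of $P'$, and $\hat\alpha_i$ denotes the corresponding terminal word; the start variable is $\langle S, S', F\rangle$ where $F$ is any variable of $G'$ with $F \to 1 \in P'$, with a union over all such $F$ effected by a fresh start symbol. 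This is the analogue, phrased in grammar language, of the Bar-Hillel product construction between a pushdown automaton and a finite automaton.

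The main obstacle is the last construction, both because its correctness (that the built grammar derives exactly $L \cap L'$) needs a mild induction on derivation length and because one must verify that only finitely many tuples $(p_i, q_i)$ need to be considered for each rule of $P$, which relies on the right-linear shape of $P'$ of Definition~\ref{rgram}. The earlier parts, by contrast, reduce either to adding a handful of productions or to inspecting the effect of right quotients on Boolean combinations.
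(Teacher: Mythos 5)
The paper does not prove this proposition at all: it is stated as a standard fact of formal language theory, with the references \cite{DLV,HMU,KS} given at the start of Section \ref{CFsec} serving as the proof. Your argument is therefore necessarily a ``different route,'' but it is exactly the standard textbook one found in those references, and it is essentially correct: the fresh-start-variable constructions for union, product and star of cf-languages; right-linear adaptations (via chain elimination) or Myhill--Nerode quotient counting, using $w^{-1}(L\cup L')=w^{-1}L\cup w^{-1}L'$, $w^{-1}(L\cap L')=w^{-1}L\cap w^{-1}L'$ and $w^{-1}(L^c)=(w^{-1}L)^c$, for the regular cases; and the Bar--Hillel triple construction for the intersection of a cf-language with a regular one. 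Two small points deserve tightening but are routine: in the triple construction your side condition $p_{i-1}\overset{*}{\to}_{G'}\alpha_{i-1}q_i$ omits the analogous requirement for the last terminal block (one needs $p_k\overset{*}{\to}_{G'}\alpha_k\,p_{k+1}$, and when a block $\alpha_i$ is empty this forces equality of the two adjacent states, since the grammar of Definition \ref{rgram} has no unit productions); and the star construction for regular languages, which you only gesture at with ``similarly,'' needs the same chain-elimination care as concatenation to stay within the right-linear format. Neither affects correctness, and finiteness of the tuple choices is automatic since $V'$ is finite.
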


In fact, regular languages can be obtained from finite languages by means of
regular operations.

\begin{theorem}[Kleene]
A language $L\subset X^*$ is {\em regular} if and only if it can be obtained
from finite languages by applying a finite number of regular operations.
\end{theorem}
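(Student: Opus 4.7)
The plan is to verify both implications separately, with the easy direction first. For the ``if'' direction, note that finite languages are regular, as observed just after Definition~\ref{rgram}, and Proposition~\ref{closure} states that regular languages are closed under the regular operations. A routine induction on the number of operations used to build $L$ then shows that any language obtained from finite ones by finitely many applications of union, product and star is regular.

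For the converse, let $L$ be regular. By the Myhill-Nerode theorem and Algorithm~\ref{orbit}, I would fix a regular grammar $G = (V, X, P, S)$ with $L(G) = L$, where $V = \{A_1,\ldots,A_m\}$ and $S = A_1$. For each $i$ put $L_i = \{w\in X^*\mid A_i\sto w\}$, so that $L = L_1$. Inspecting the allowed productions of a regular grammar (Definition~\ref{rgram}) yields the system of language equations
\[
L_i \ =\ \bigcup_{A_i\to x_k A_j\,\in\, P} \{x_k\}\, L_j \ \cup\ C_i,
\]
where $C_i = \{1\}$ if $A_i\to 1\in P$ and $C_i = \emptyset$ otherwise.

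The key tool will be Arden's lemma: if $R,T\subset X^*$ and $1\notin R$, then the unique solution of the equation $Y = RY\cup T$ is $Y = R^*T$. Since in the above system every occurrence of a variable $L_j$ on the right is preceded by a terminal $x_k$, the ``coefficient'' of any variable never contains the empty word, so Arden's lemma is applicable. I would then eliminate one variable at a time: rearrange the equation for $L_m$ into the form $L_m = R L_m\cup T$, with $R$ and $T$ built by regular operations from the singletons $\{x_k\}$ and from the remaining $L_j$'s, solve as $L_m = R^* T$, and substitute this expression back into the equations for $L_1,\ldots,L_{m-1}$. Iterating this procedure for $m$ steps yields a closed-form expression for $L_1$ built from the finite languages $\{x_k\}$ and $\{1\}$ using only union, product and star, which is what we want.

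The main obstacle is the bookkeeping in the elimination step: after each substitution, one must verify that the new coefficient of each remaining variable still avoids the empty word, so that Arden's lemma can be applied again. This should follow by induction from the invariant that at every stage each remaining variable $L_j$ appears on the right-hand side only as a factor in a product whose left factor is a regular expression built from singletons $\{x_k\}$ (and hence contains no empty word), a property that is plainly preserved by substitution. Once this invariant is maintained throughout, the closed-form expression produced at the end is manifestly obtained from finite languages by a finite sequence of regular operations, completing the proof.
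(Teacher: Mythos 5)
Your argument is correct, but note that the paper itself gives no proof of Kleene's theorem: it is quoted as a classical background result, with the references [KS], [HMU], [DLV] standing in for a proof. Your write-up is therefore a genuinely independent, self-contained argument, and it follows the standard textbook route: the easy direction by the closure properties of Proposition \ref{closure} plus induction, and the hard direction by turning a right-linear grammar into a system of language equations and eliminating variables with Arden's lemma. Two small remarks. First, you do not need Myhill--Nerode or Algorithm \ref{orbit} to produce the grammar: in this paper a regular language is \emph{defined} (Definition \ref{rgram}) as the language of a right-linear grammar, so the grammar is given to you for free; invoking the algorithm is harmless but superfluous. Second, Arden's lemma is stated but not proved; it is standard, but for completeness you should either cite it or note the one-line argument (if $1\notin R$ and $Y=RY\cup T$, then unrolling the equation $d+1$ times shows every word of length $d$ in $Y$ lies in $R^kT$ for some $k\le d$, giving $Y\subset R^*T$, while $R^*T$ is always a solution). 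Also, your invariant is phrased a bit loosely --- after substitutions the coefficient of a surviving variable is not literally ``built from singletons'' (stars do appear in it), but it always has a singleton $\{x_k\}$ as a factor of the product, which is exactly what guarantees $1\notin R$ at every stage; with that wording fixed, the elimination goes through and the final expression for $L=L_1$ uses only finite languages, union, product and star, as required.
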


For instance, the regular language $L = \{x^my^n\mid m,n\geq 0\}$ can be
obtained as $L_1^* L_2^*$ where $L_1 = \{x\}$ and $L_2 = \{y\}$.
Indeed, we may use a {\em regular expression} to denote any element of $L$,
namely $x^*y^*$.


\section{Hilbert series and generating functions}
\label{CSsec}

Let $V\subset F$ be a graded subspace of $F$, that is, $V = \sum_{d\geq 0} V_d$
where $V_d = V\cap F_d$. The {\em Hilbert series of the graded subspace $V$}
is by definition the generating function of the sequence $\{\dim V_d\}_{d\geq 0}$,
namely
\[
\HS(V) = \sum_{d\geq 0} (\dim V_d) t^d.
\] 
In theoretical computer science, a similar notion is provided for any language
$L\subset X^*$. For all $d\geq 0$, put $L_d = \{w\in L\mid |w| = d\}$
and denote by $\# L_d$ the cardinality of the set $L_d$.
The {\em generating function of the language $L$} is defined as
the generating function of the sequence $\{\# L_d\}_{d\geq 0}$, that is
\[
\gamma(L) = \sum_{d\geq 0} (\# L_d) t^d = \sum_{w\in L} t^{|w|}.
\]
If $V = \KK L$ is the (graded) subspace of $F$ that is generated by $L$,
one has clearly that $\HS(V) = \gamma(L)$. For instance, we have that
$\HS(F) = \gamma(X^*) = 1/(1 - nt)$. If $A = F/I$ is a monomial algebra,
we define its {\em Hilbert series} as $\HS(A) = \HS(F) - \HS(I)$.
By denoting $L(I) = I\cap X^*$, we have that $I = \KK L(I)$ and hence
$\HS(I) = \gamma(L(I))$. Observe that if $L_1\subset X^*$ is the minimal
monomial basis of $I$ then $L(I) = X^* L_1 X^*$. We call $L(I)$
the {\em language of the monomial ideal $I$}.

For noncommutative algebras, the sum of the series $\HS(A)$ is either a rational
or a non-rational function. The computation of this fundamental invariant
is the main goal of the present paper. Besides the rational case,
we are especially interested when the Hilbert series $\HS(A) = f(t)$ is an
algebraic function, that is, $f(t)$ is an algebraic element over the rational
function field $\KK(t)$. In other words, $f(t)$ is a root of a commutative
univariate polynomial with coefficients in $\KK(t)$, or equivalently
in $\KK[t]$. In this case, the task clearly becomes to compute such a
polynomial.

If $A$ has a finite global dimension, the homology of $A$ provides
immediately a way to compute $\HS(A)$. In fact, since we have a graded
exact sequence
\[
0\to \KK L_k\otimes A\to\ldots\to \KK L_1\otimes A\to \KK L_0\otimes A
\to A \to \KK\to 0
\]
one has immediately that
\[
\sum_{i = 0}^k (-1)^i \gamma(L_i)\HS(A) - \HS(A) + 1 = 0.
\]
Because $L_0 = X$, we finally obtain the formula
\begin{equation}
\label{hilbhom}
\HS(A) = \left( 1 - n t - \sum_{i=1}^k (-1)^i \gamma(L_i) \right)^{-1}.
\end{equation}
We conclude that the Hilbert series of a monomial algebra $A$ is directly
related to the generating functions of the chain languages $L_i = L_i(A)$.
If the 1-chain language $L_1$ is a finite language, we have already observed
that all $L_i$ are also finite languages, that is, $\gamma(L_i)$ are polynomials.
In this case, therefore, the Hilbert series $\HS(A)$ is a rational function
which can be immediately computed using the formula (\ref{hilbhom}). When
the $L_i$ are instead infinite sets, it is important to understand if and
how one can compute their generating functions. Theoretical computer science
provides a positive answer for the class of unambiguous cf-languages.

\begin{definition}
\label{system}
Let $G = (V,X,P,S)$ be an unambiguous cf-grammar where
$V = \{A_1,\ldots,A_m\}, S = A_1, X = \{x_1,\ldots,x_n\}$ and
$P = \{A_i \to \alpha_{i1}\mid \ldots\mid \alpha_{ik_i}\}_{1\leq i\leq m}$
where $\alpha_{ij}\in (V\cup X)^*$. Let $t\notin V\cup X$ be a new
variable and consider the rational function field $\KK(t)$. Moreover,
consider the algebra $R = \KK(t)[A_1,\dots,A_m]$ of the commutative
polynomials in the variables $A_i\in V$ with coefficients in the field $\KK(t)$.
The product of a coefficient in $\KK(t)$ with a monomial of $R$
is called a {\em term of $R$}. To each production
$A_i \to \alpha_{i1}\mid \ldots\mid \alpha_{ik_i}$ we can associate
the commutative polynomial $A_i - \bar{\alpha}_{i1} - \cdots - \bar{\alpha}_{ik_i}$
in $R$, where $\bar{\alpha}_{ij}$ is obtained from the word $\alpha_{ij}$
by substituting each terminal $x_i\in X$ with the variable $t$ and
then transforming the resulting word into a term of the algebra $R$.
For instance, if $\alpha = A_2 x_1 A_1 x_2 A_2$ then $\bar{\alpha} =
t^2 A_1 A_2^2$. Then, denote by $S(G)$ the following system of algebraic
equations
\[
S(G):
\left\{
\begin{array}{ccl}
A_1 & = & \bar{\alpha}_{11} + \cdots + \bar{\alpha}_{1k_1}, \\
\vdots \\
A_m & = & \bar{\alpha}_{m1} + \cdots + \bar{\alpha}_{mk_m}.
\end{array}
\right.
\]
We call $S(G)$ the {\em algebraic system of the (unambiguous) cf-grammar $G$}.
\end{definition}

Given an unambiguous cf-grammar $G$, observe that all variables $A_i\in V$
correspond to languages
\[
L_G(A_i) = \{w\in X^*\mid A_i\sto w\}.
\]
One has clearly that $L(G) = L_G(S)$ and all $L_G(A_i)$ are unambiguous
cf-languages. A fundamental result is the following one.

\begin{theorem}[Chomsky-Sch\"utzenberger]
\label{CSth}
Let $G = (V,X,P,S)$ be an unambiguous cf-grammar as in Definition \ref{system}
and denote $\gamma_i = \gamma(L_G(A_i))$, for any $i = 1,2,\ldots,m$. Then,
the $m$-tuple $(\gamma_1,\ldots,\gamma_m)$ is a solution of the algebraic
system $S(G)$. Moreover, each $\gamma_i$ is an algebraic function.
\end{theorem}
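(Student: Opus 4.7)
The plan is to split the theorem into two assertions and handle them in sequence: first derive the equations of $S(G)$ from the unambiguity of $G$, then extract univariate algebraic equations for each $\gamma_i$ by elimination. Fix $A_i\in V$ and any $w\in L_G(A_i)$. Because $G$ is unambiguous, the leftmost derivation $A_i\lsto w$ is unique, and in particular its first step uses a single rule $A_i\to\alpha_{ij}$. Partitioning $L_G(A_i)$ according to this first rule yields the disjoint decomposition
\[
L_G(A_i) \;=\; \bigsqcup_{j=1}^{k_i}\bigl\{w\in X^*\mid A_i\to\alpha_{ij}\lsto w\bigr\}.
\]

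Writing $\alpha_{ij} = \beta_1\cdots\beta_r$ with $\beta_\ell\in V\cup X$, and setting $L_G(x) = \{x\}$ for $x\in X$, I would then argue that every $w$ derivable from $\alpha_{ij}$ factors as $w = w_1\cdots w_r$ with $w_\ell\in L_G(\beta_\ell)$, and that this factorization is unique: two distinct factorizations would splice (by processing the $\beta_\ell$ from left to right) into two distinct leftmost derivations of $w$ from $A_i$, contradicting unambiguity. Hence the concatenation map $L_G(\beta_1)\times\cdots\times L_G(\beta_r)\to\{w\mid \alpha_{ij}\lsto w\}$ is a length-preserving bijection. Passing to generating functions turns disjoint unions into sums and such unambiguous length-preserving concatenations into products, with each terminal contributing a factor $t$; combining these observations yields
\[
\gamma_i \;=\; \sum_{j=1}^{k_i}\bar\alpha_{ij}(t,\gamma_1,\ldots,\gamma_m),
\]
which is the $i$-th equation of $S(G)$. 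This establishes the first assertion.

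For the algebraicity of each $\gamma_i$ I would consider the ideal $J\subset\KK[t,A_1,\ldots,A_m]$ generated by the polynomials $A_i-\sum_{j}\bar\alpha_{ij}$, whose zero set contains $(t,\gamma_1,\ldots,\gamma_m)$ regarded as a point with coordinates in $\KK[[t]]$. Applying elimination, for instance through a \Gr\ basis of $J$ with respect to a lexicographic order $A_1>\cdots>A_m>t$ (or through iterated resultants), should produce, for each $i$, a polynomial $P_i(t,A_i)\in J\cap\KK[t,A_i]$; since $P_i(t,\gamma_i)=0$ one then reads off the desired algebraic dependence of $\gamma_i$ over $\KK(t)$.

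The main obstacle is guaranteeing that the elimination ideal $J\cap\KK[t,A_i]$ is nonzero, equivalently that the component of $V(J)$ containing $(\gamma_1,\ldots,\gamma_m)$ is one-dimensional over $\KK$. The cleanest route I see is a formal power series argument: after a standard preprocessing that eliminates chain rules $A\to B$ and the productions $A\to 1$ (peeling off their effect on the constant terms), every monomial of $\bar\alpha_{ij}$ carries a positive power of $t$, so the Jacobian $I-\partial(\sum_j\bar\alpha_{ij})/\partial(A_1,\ldots,A_m)$ reduces to the identity at $t=0$. The implicit function theorem then identifies $(\gamma_1,\ldots,\gamma_m)$ as the unique tuple in $\KK[[t]]^m$ solving $S(G)$ with the prescribed constant terms, so the analytic branch containing it defines an irreducible curve in $V(J)$. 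Noetherianity of $\KK[t,A_1,\ldots,A_m]$ then forces $J\cap\KK[t,A_i]\neq 0$ for every $i$, closing the argument.
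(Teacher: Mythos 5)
The paper itself does not prove Theorem \ref{CSth}: it states the result and refers to \cite{KS,Pa,St} for proofs, so your argument can only be measured against the standard ones. Your first half is exactly the classical argument and is fine as a sketch: partitioning $L_G(A_i)$ according to the first rule of the unique leftmost derivation, checking that the factorization of a word along the symbols of $\alpha_{ij}$ is unique (two factorizations would splice into two leftmost derivations), and translating disjoint union into sum and unambiguous, length-additive concatenation into product of generating functions, which yields the equations of $S(G)$.

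The gap is in the algebraicity half. A small slip first: after removing chain and $\varepsilon$-productions it is \emph{not} true that every monomial of $\bar\alpha_{ij}$ carries a positive power of $t$ (a production $A\to BC$ contributes $A_BA_C$); what is true is that every monomial either contains $t$ or has degree at least $2$ in the nonterminals, and since after this preprocessing the constant terms of the $\gamma_i$ vanish, the Jacobian $I-\partial\Phi/\partial A$ is the identity at $(t,A)=(0,0)$, so the formal implicit function theorem does apply as you intend. The serious problem is the chain ``unique formal solution, hence the analytic branch defines an irreducible curve in $V(J)$, hence Noetherianity forces $J\cap\KK[t,A_i]\neq 0$.'' Uniqueness of the solution in $\KK[[t]]^m$ does not bound the dimension of the Zariski closure of the branch: that closure has dimension $\operatorname{trdeg}_{\KK}\KK(t,\gamma_1,\dots,\gamma_m)$, so asserting it is a curve is literally equivalent to the algebraicity you are trying to prove, and the step is circular as written. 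Moreover, even if that component is a curve, Noetherianity does not give $J\cap\KK[t,A_i]\neq 0$: another component of $V(J)$ may project dominantly onto the $(t,A_i)$-plane; what you can extract is a nonzero polynomial vanishing on the curve, which suffices for algebraicity of $\gamma_i$ but is not the statement you invoke (and is why the paper's Theorem \ref{lexgb} is stated conditionally on $I_1\neq 0$). The ingredient you already have does close the argument, but via dimension theory rather than Noetherianity: along the solution the Jacobian determinant $\det\bigl(I-\partial\Phi/\partial A\bigr)$ is a unit of $\KK[[t]]$, hence does not lie in the prime $\mathfrak p=\ker\bigl(\KK(t)[A_1,\dots,A_m]\to\KK((t)),\ A_i\mapsto\gamma_i\bigr)$; so it is nonzero on a dense open subset of $V(\mathfrak p)$, and at such a point the tangent space of the zero set of the $m$ equations is $0$-dimensional, forcing $\dim_{\KK(t)}V(\mathfrak p)=0$, i.e.\ $\operatorname{trdeg}_{\KK(t)}\KK(t)(\gamma_1,\dots,\gamma_m)=0$, whence each $\gamma_i$ is algebraic over $\KK(t)$. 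Either this argument, or the careful resultant/elimination inductions of \cite{KS,St,Pa}, must replace your final two sentences.
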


We refer to \cite{KS,Pa,St} for proofs of the above result. In these references
one also finds methods for computing a univariate polynomial $0\neq p(A_i)\in \KK(t)[A_i]$
such that $p(\gamma_i) = 0$. Even if these general procedures are quite involved,
one has a simpler method that works for many concrete grammars. We assume
the reader is familiar with the theory of (commutative) \Gr\ bases. For a complete
reference see, for instance, \cite{GP}.

\begin{theorem}
\label{lexgb}
Let $\KK'$ be any field and consider the commutative polynomial algebra
$R = \KK'[A_1,\ldots,A_m]$. Let $I = \langle f_1,\ldots,f_k \rangle$ be an
ideal of $R$ and consider the corresponding algebraic system
\[
S:
\left\{
\begin{array}{ccl}
f_1 & = & 0, \\
\vdots \\
f_k & = & 0.
\end{array}
\right.
\]
Denote $R_1 = \KK'[A_1]$ and put $I_1 = I\cap R_1$ which is an ideal of the univariate
polynomial algebra $R_1$. Let $G = \{g_1,\ldots,g_l\}\subset R$ be a \Gr\ basis of $I$
with respect to the lexicographic monomial ordering of $R$ such that $A_1\prec \ldots \prec A_m$.
If $I_1\neq 0$, there exists $g_i\in G$ such that $g_i\in I_1$. We have therefore that
$g_i(\gamma_1) = 0$, for all solutions $(\gamma_1,\ldots,\gamma_m)$ of the algebraic
system $S$.
\end{theorem}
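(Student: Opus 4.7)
The plan is to invoke the elimination property of lexicographic Gröbner bases, adapted to the convention of the paper in which the variable to be retained, $A_1$, is the smallest in the order. The strategy is to show that $G \cap R_1$ is already a Gröbner basis of $I_1$, so that $I_1 \neq 0$ forces $G \cap R_1$ to be nonempty and so produces the desired $g_i$. The final assertion about $g_i(\gamma_1) = 0$ is then immediate from ideal membership.

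The crux is the following monomial lemma: for any $f \in R$, one has $f \in R_1$ if and only if $\lm(f) \in R_1$, where $\lm$ denotes the leading monomial with respect to the given lex order. The ``only if'' direction is clear. For the ``if'' direction, the key observation is that because $A_1 \prec A_2 \prec \ldots \prec A_m$, every monomial in $R$ that involves some variable $A_j$ with $j \geq 2$ is lex-greater than any monomial of $R_1 = \KK'[A_1]$. Consequently, if some term of $f$ contained a variable $A_j$ with $j \geq 2$, then $\lm(f)$ would necessarily contain such a variable too, contradicting $\lm(f) \in R_1$.

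Granted this lemma, suppose $I_1 \neq 0$ and pick any nonzero $f \in I_1$. Since $G$ is a Gröbner basis of $I$, there is some $g_i \in G$ with $\lm(g_i)$ dividing $\lm(f)$. From $f \in R_1$ and the lemma we get $\lm(f) \in R_1$; since $\lm(g_i)$ divides a monomial in $R_1$, it lies in $R_1$ as well, and the lemma applied a second time gives $g_i \in R_1$. Combined with $g_i \in I$, this yields $g_i \in I \cap R_1 = I_1$, as required.

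For the final statement, any solution $(\gamma_1, \ldots, \gamma_m)$ of $S$ satisfies $f_j(\gamma_1, \ldots, \gamma_m) = 0$ for every generator by definition, hence $h(\gamma_1, \ldots, \gamma_m) = 0$ for every $h \in I$, in particular for $h = g_i$. Since $g_i$ involves only the variable $A_1$, this reads $g_i(\gamma_1) = 0$. The only real technical point — and not a genuine obstacle — is the monomial lemma above; the rest is a standard application of the Gröbner basis property, and the theorem simply records the classical elimination result in the form convenient for extracting a univariate annihilator of $\gamma_1$ from the system $S(G)$ attached to an unambiguous cf-grammar.
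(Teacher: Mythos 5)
Your proof is correct. The paper gives no argument for Theorem \ref{lexgb} at all: it records the classical elimination property of lexicographic \Gr\ bases (deferring to \cite{GP}), and your write-up is exactly the standard proof of that property --- the observation that under lex with $A_1\prec\ldots\prec A_m$ any monomial involving $A_j$, $j\geq 2$, exceeds every monomial of $\KK'[A_1]$, so $f\in R_1$ iff $\lm(f)\in R_1$, combined with divisibility of leading monomials for the \Gr\ basis $G$ and evaluation of elements of $I$ at common zeros of the generators.
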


\begin{Example}
Consider the unambiguous cf-grammar $G = (V,X,P,S)$ where $V = \{S, A, B\},
X = \{x,y\}$ and
\[
P = \{S\to A\mid B, A\to 1\mid x A y A, B\to x S\mid x A y B\}.
\]
This grammar generates all unambiguous expressions in the clauses
``if-then-else'' (matched) and ``if-then'' (unmatched). We denote
``if-then'' with the terminal $x$ and ``else'' with the terminal $y$.
The variable $A$ represents the matched clauses and $B$ the unmatched
ones. For instance, in the language $L(G)$ we have the word $w = x y x x y y$
corresponding to the expression
\begin{center}
\em
\begin{tabular}{l}
if-then \\
else \\
if-then \\
\quad if-then \\
\quad else \\
else. \\
\end{tabular}
\end{center}
Since $G$ is unambiguous, by Theorem \ref{CSth} we know that the generating
functions $\gamma_S = \gamma(L_G(S)), \gamma_A = \gamma(L_G(A))$ and 
$\gamma_B = \gamma(L_G(B))$ are algebraic ones. We show now that we can
compute the corresponding univariate polynomials $p_S,p_A,p_B$ with
coefficients in $\KK(t)$ by means of \Gr\ bases computations.
Consider the ideal $I = \langle f_1,f_2,f_3 \rangle$ of the commutative algebra
$R = \KK(t)[S,A,B]$ that is generated by the polynomials corresponding
to the algebraic system $S(G)$, namely
\[
\begin{array}{ccl}
f_1 & = & S - A - B, \\
f_2 & = & A - 1 - t^2 A^2, \\
f_3 & = & B - t S - t^2 A B. \\
\end{array}
\]
Assume $\Char(\KK) = 0$. With respect to the lexicographic monomial
ordering of the polynomial algebra $R$ with $S\prec A\prec B$,
we obtain the following \Gr\ basis of the ideal $I$
\[
\begin{array}{ccl}
g_1 & = & t(2t - 1) S^2 + (2t - 1) S + 1, \\
g_2 & = & t A - (2t - 1) S - 1, \\
g_3 & = & B + A - S.
\end{array}
\]
This implies that $p_S = g_1$. For $A\prec B\prec S$,
we obtain the \Gr\ basis
\[
\begin{array}{ccl}
g_1 & = & t^2 A^2 - A + 1, \\
g_2 & = & (2t - 1) B - t^2 A^2 + t A, \\
g_3 & = & S - B - A.
\end{array}
\]
Therefore, one has that $p_A = g_1$. Finally, for $B\prec S\prec A$,
we have the \Gr\ basis 
\[
\begin{array}{ccl}
g_1 & = & t^2(2t - 1) B^2 + (t + 1)(2t - 1) B + t, \\
g_2 & = & (t-1) S + t B + 1, \\
g_3 & = & A - S + B.
\end{array}
\]
We conclude that $p_B = g_1$. One of the roots of the
polynomial $p_S$ is
\[
\gamma_S =
- \frac{2t - 1 + \sqrt{1 - 4 t^2}}{2t(2t - 1)}
\]
which admits a power series expansion having the correct (non-negative
integer) coefficients, namely
\[
\gamma_S =
1 + t + 2t^2 + 3t^3 + 6t^4 + 10t^5 + 20t^6 + 35t^7 + \ldots =
\sum_{d = 0}^\infty \binom{d}{\lfloor d/2 \rfloor} t^d.
\]
In the same way, one obtains the algebraic functions $\gamma_A,\gamma_B$.
\end{Example}

We conclude this section by observing that Theorem \ref{CSth} can be
clearly applied to regular, that is, right linear grammars
(see Definition \ref{rgram}), where the corresponding algebraic systems
are in fact linear ones and \Gr\ bases computations are just Gaussian
eliminations over the rational function field $\KK(t)$. In other words,
the generating function of a regular language is a rational function.
These ideas can be applied to monomial algebras by means of the following
notion.

\begin{definition}
Let $A = F/I$ be a monomial algebra and let $L = L(I) = I\cap X^*$
be the language of the monomial ideal $I$. We call $A$ an {\em automaton
(monomial) algebra} when $L$ is a regular language.
\end{definition}

Note that the term ``automaton'' corresponds to the concept of finite-state
automata which are the recognizer machines of regular languages.
By the closure properties of regular languages of Proposition \ref{closure},
one has that $A$ is an automaton algebra if and only if the normal words
language $N(I) = X^*\setminus L(I)$ is a regular one. Moreover, in Section
\ref{govsec} we will prove that the automaton property is also equivalent
to require that the 1-chain language $L_1(A)$ is a regular language.

Observe now that if $A = F/I$ is an automaton algebra then $\HS(A)$
is a rational function which can be computed in the following way.
If $L = L(I)$, we have clearly that $\HS(A) = 1/(1 - nt) - \gamma(L)$.
Since $L$ is a regular language, one has that Algorithm \ref{orbit} computes
a (minimal) regular grammar $G$ generating $L$ and hence the corresponding
linear system $S(G)$. By solving such system, we obtain finally the
rational function $\gamma(L)$ and hence $\HS(A)$. An improved version
of this method can be found in \cite{LS2} and \cite{LST} where an efficient
implementation in \singular has also been provided. The improvement essentially
consists in obtaining the elements of $Q(L)$ by means of computations
over the minimal monomial basis $L_1(A)\subset I$.

A cf-grammar $G$ is called {\em linear} if the right-hand sides of productions
are linear with respect to nonterminals. It is important to note that
a cf-grammar may be linear without being right linear, that is, regular.
In this case, the Myhill-Nerode approach fails to find the rational
generating function of the language $L = L(G)$ because the set $Q(L)$
is infinite, but Chomsky-Sch\"utzenberger's algebraic system $S(G)$
fits for purpose when $G$ is unambiguous. As an example, consider again
the palindromes language $L$ whose unambiguous linear grammar $G$
has the following set of productions 
\[
P = \{S\to 1\mid x_1\mid\ldots\mid x_n\mid x_1 S x_1\mid\ldots\mid x_n S x_n\}.
\]
The corresponding algebraic system $S(G)$ is the single linear equation
\[
S = 1 + n t + n t^2 S
\]
whose rational solution is $\gamma(L) = (1 + n t)/(1 - n t^2)$.


\section{A description of the chain languages}
\label{govsec}

In this section we provide formulas for the chain languages $L_k = L_k(A)$ ($k\geq 1$)
of a monomial algebra $A = F/I$ in terms of the powers of the language $L(I) =
X^* L_1 X^*$. By the closure properties of regular languages, this implies that
if $L_1$ is a regular language then all chain languages $L_k$ are also regular,
that is, the regularity property propagates along the homology of $A$.
We provide, instead, an example of an unambiguous cf-language $L_1$ such that
$L_2$ is not a cf-language. 

If $I,J$ are two-sided ideals of $F$, one defines the two-sided ideal $I J\subset F$
as the subspace generated by the set-theoretic product $\{f g\mid f\in I,g\in J\}$.
In particular, one can define the powers $I^k$ ($k\geq 0$) where by definition
$I^0 = F$. We consider also the (maximal) graded two-sided ideal $F_+ =
\sum_{d > 0} F_d = \langle x_1,\ldots,x_n \rangle$ so that $F/F_+$ is isomorphic
to the base field $\KK$. The following result is a simplified version of Lemma 3
in \cite{Go2}.

\begin{lemma}
\label{govorov}

Let $I\subset F$ be a graded two-sided ideal and consider $A = F/I$
the corresponding (finitely generated) graded algebra. Moreover, assume that
$I\subset F_+$. For all $k\geq 0$, consider the graded two-sided ideals
$J_{2k} = I^k$ and $J_{2k+1} = F_+ I^k$. Since $I^0 = F$, note that $J_0 = F,
J_1 = F_+$. For all $k\geq 1$, one has the following isomorphism of graded
vector spaces
\[
\Tor_k^A(\KK,\KK) \approx (J_k\cap J_{k-1} F_+) / (J_{k+1} + J_k F_+).
\]
\end{lemma}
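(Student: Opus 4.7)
The plan is to identify $(J_k \cap J_{k-1} F_+)/(J_{k+1} + J_k F_+)$ with the degree-$k$ generating space $V_k$ of a minimal graded free resolution $\ldots \to V_k \otimes A \to \cdots \to V_0 \otimes A = A \to \KK \to 0$ of $\KK$ as a right $A$-module. Since $V_k \cong \Tor_k^A(\KK, \KK)$ by minimality of the resolution, this would establish the claim.

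I will lift each right $A$-linear differential $d_k$ to a right $F$-linear map $\tilde d_k : V_k \otimes F \to V_{k-1} \otimes F$ with $\tilde d_k(V_k \otimes 1) \subset V_{k-1} \otimes F_+$ (minimality) and $\tilde d_{k-1} \tilde d_k(V_k \otimes F) \subset V_{k-2} \otimes I$ (the complex condition modulo $I$). The main object is the composite $\rho_k = \tilde d_1 \circ \cdots \circ \tilde d_k : V_k \otimes F \to F$. By right $F$-linearity one obtains the recursion $\rho_k(v \otimes 1) = \sum_\alpha \rho_{k-1}(w_\alpha \otimes 1) \cdot e_\alpha$ whenever $\tilde d_k(v \otimes 1) = \sum_\alpha w_\alpha \otimes e_\alpha$ with $e_\alpha \in F_+$.

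The central calculation is to verify by induction on $k$ that $\rho_k(V_k \otimes 1) \subset J_k \cap J_{k-1} F_+$. The containment in $J_{k-1} F_+$ follows immediately from the recursion together with the inductive hypothesis $\rho_{k-1}(V_{k-1} \otimes 1) \subset J_{k-1}$. The containment in $J_k$ follows by grouping the $k$ factors produced by the iterated differentials into consecutive pairs via the compatibility condition: each pair of adjacent factors, when assembled with the appropriate indices, yields an element of $I$; this produces $I^m = J_{2m}$ for even $k = 2m$, and $F_+ \cdot I^m = J_{2m+1}$ for odd $k = 2m+1$ with one isolated $F_+$-factor remaining. Moreover, altering $\tilde d_k$ by a map into $V_{k-1} \otimes I$ changes $\rho_k$ by an element of $J_{k-1} \cdot I \subset J_{k+1}$, and since $\rho_k(V_k \otimes F_+) = \rho_k(V_k \otimes 1) \cdot F_+ \subset J_k F_+$, the map descends to a well-defined homomorphism $\Phi_k : V_k \to (J_k \cap J_{k-1} F_+)/(J_{k+1} + J_k F_+)$.

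It then remains to show $\Phi_k$ is bijective, which I would establish by induction on $k$. For surjectivity, given $r \in J_k \cap J_{k-1} F_+$, write $r = \sum_i r_i e_i$ with $r_i \in J_{k-1}$ and $e_i \in F_+$, apply the inductive isomorphism $\Phi_{k-1}^{-1}$ to obtain classes $u_i \in V_{k-1}$ with $\rho_{k-1}(u_i \otimes 1) \equiv r_i$ modulo $J_k + J_{k-1} F_+$, and verify that $\sum_i u_i \otimes e_i$ is a cycle for $d_{k-1}$ modulo $I$ (exploiting the membership $r \in J_k$), so it lies in the image of some $\tilde d_k(v \otimes 1)$. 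For injectivity, if $\rho_k(v \otimes 1) \in J_{k+1} + J_k F_+$, then one can adjust the lifts to eliminate $v$ from the generating set $V_k$, and minimality forces $v = 0$. The main obstacle is the surjectivity argument: checking that the decomposition of $r$ produces an actual cycle for $d_{k-1}$ modulo $I$ relies delicately on the parity-dependent structure of $J_k$, so the induction step must be handled slightly differently for even and odd $k$.
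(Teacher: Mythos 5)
Your construction of $\rho_k$ and the containment $\rho_k(V_k\otimes 1)\subset J_k\cap J_{k-1}F_+$ are correct: pairing consecutive lifted differentials gives the $J_k$-membership exactly as you describe, and changing $\tilde d_k$ by a map into $V_{k-1}\otimes I$ moves $\rho_k(v\otimes 1)$ only by $J_{k-1}I=J_{k+1}$, so $\Phi_k$ is a well-defined linear map once the lifts are fixed. The genuine gap is the bijectivity of $\Phi_k$, which is the whole content of the lemma. In the surjectivity step you write $r=\sum_i r_ie_i$ with $r_i\in J_{k-1}$, $e_i\in F_+$, and apply $\Phi_{k-1}^{-1}$ to the $r_i$; but $\Phi_{k-1}$ is only defined on $(J_{k-1}\cap J_{k-2}F_+)/(J_k+J_{k-1}F_+)$, and $J_{k-1}\not\subset J_{k-2}F_+$ in general (for $I=(xy)$ one has $xxy\in J_3=F_+I$ but $xxy\notin J_2F_+=IF_+$, and $xyxy\in J_4=I^2$ but $xyxy\notin J_3F_+=F_+IF_+$), so the inductive hypothesis does not produce the classes $u_i$ you need. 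Even granting them, $\rho_{k-1}(u_i\otimes 1)$ agrees with $r_i$ only modulo $J_k+J_{k-1}F_+$, and after right multiplication by $e_i$ the error lies in $J_kF_+ + J_{k-1}F_+F_+$; the second summand is not contained in $J_{k+1}+J_kF_+$ (already for $k=2$ one has $F_+^3\not\subset F_+I+IF_+$), so the congruence $\rho_k(v\otimes 1)\equiv r$ does not follow, and being in $J_k$ does not rescue it since the target quotient is precisely what is nonzero. The injectivity step, ``adjust the lifts to eliminate $v$'', is not an argument: minimality only gives $\tilde d_k(V_k\otimes 1)\subset V_{k-1}\otimes F_+$, and nothing in the sketch excludes a nonzero $v$ with $\rho_k(v\otimes 1)\in J_{k+1}+J_kF_+$; excluding it requires an exactness statement about the ideals $J_k$ that your setup never establishes.

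For comparison, the paper (following Govorov's Lemma 3, which it cites and sketches in the proof of Theorem \ref{govorov2}) does not go through the minimal resolution at all: the quotients $J_k/J_{k+2}$, with maps induced by the inclusions $J_{k+1}\subset J_k$, form a graded free right $A$-module resolution $\cdots\to J_2/J_4\to J_1/J_3\to A\to\KK\to 0$; freeness holds because each $J_k$ is a free right $F$-module (a right ideal of a free algebra is free) and $J_{k+2}=J_kI$, while exactness is immediate from the inclusions. Tensoring this resolution with $\KK$ over $A$ and taking homology yields $(J_k\cap J_{k-1}F_+)/(J_{k+1}+J_kF_+)$ directly. If you want to keep your viewpoint, the honest repair is to prove freeness and exactness of this complex first and then realize your $\Phi_k$ as the comparison map between it and the minimal resolution; as written, the bijectivity of $\Phi_k$ is asserted rather than proved.
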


Recall now that if $A = F/I$ is a monomial algebra where
$L_1 = L_1(A)\subset (X^+)^2 = \{w\in X^*\mid |w|\geq 2\}$, then the homology
of $A$ is obtained in terms of the chain languages $L_k = L_k(A)$, that is,
$\Tor_k^A(\KK,\KK)$ is isomorphic to $\KK L_{k-1}$ ($k\geq 0$) as a graded
vector space. In fact, the algebra $F$ and hence the monomial algebra $A$
are graded by the free monoid $X^*$ and we have that $\Tor_k^A(\KK,\KK)$
and $\KK L_{k-1}$ are isomorphic as $X^*$-graded vector spaces.

Since $X^+ = \{w\in X^*\mid |w|\geq 1\}$, we have clearly that $F_+ = \KK X^+$.
We can restate Lemma \ref{govorov} in terms of the chain languages
in the following way.

\begin{theorem}
\label{govorov2}

Let $L_1\subset (X^+)^2$ be a minimal monomial basis and denote $L = X^* L_1 X^*$.
For all $k\geq 1$, it holds that
\begin{equation}
\label{govform}
\begin{array}{rcl}
L_{2k} & = & (X^+ L^k \cap L^k X^+)\setminus (X^+ L^k X^+ \cup L^{k+1}), \\
L_{2k-1} & = & (X^+ L^{k-1} X^+ \cap L^k) \setminus (X^+ L^k \cup L^k X^+).
\end{array}
\end{equation}
\end{theorem}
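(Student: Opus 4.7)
The plan is to invoke Lemma \ref{govorov} and translate the resulting ideal-theoretic subquotient into a language-theoretic formula via the natural correspondence $J \mapsto L(J) = J \cap X^*$ between monomial two-sided ideals of $F$ and languages on $X$.

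First I would record the elementary dictionary: for any monomial two-sided ideals $J_1, J_2 \subset F$ one has $L(J_1 J_2) = L(J_1) \cdot L(J_2)$ (language concatenation), $L(J_1 + J_2) = L(J_1) \cup L(J_2)$, and $L(J_1 \cap J_2) = L(J_1) \cap L(J_2)$. Applying these identities iteratively to $F_+ = \KK X^+$ and $I = \KK L$ (where $L = X^* L_1 X^*$) yields $L(I^k) = L^k$, $L(F_+ I^k) = X^+ L^k$, $L(I^k F_+) = L^k X^+$, and $L(F_+ I^k F_+) = X^+ L^k X^+$. Substituting the definitions $J_{2k} = I^k$ and $J_{2k+1} = F_+ I^k$ into the four ideals $J_j, J_{j-1} F_+, J_{j+1}, J_j F_+$ appearing in Lemma \ref{govorov}, and splitting according to the parity of $j$, produces precisely the four languages occurring in (\ref{govform}).

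Next I would invoke Lemma \ref{govorov} to obtain the isomorphism $\Tor_j^A(\KK,\KK) \cong (J_j \cap J_{j-1} F_+) / (J_{j+1} + J_j F_+)$, with the sharper observation that it is an isomorphism of $X^*$-multigraded vector spaces rather than only of $\ZZ$-graded ones: since $F$ is naturally $X^*$-graded and all the $J_j$ are monomial, the bar (or any minimal) resolution used to compute $\Tor$ inherits the $X^*$-grading, and Govorov's construction is explicit enough to preserve it. Because every monomial subquotient of $F$ has each $X^*$-homogeneous component of dimension at most one, such a quotient is freely spanned by the monomials lying in the numerator but not in the denominator, namely the set $[L(J_j) \cap L(J_{j-1} F_+)] \setminus [L(J_{j+1}) \cup L(J_j F_+)]$. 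On the other hand, the minimal resolution (\ref{resol}) identifies $\Tor_j^A(\KK,\KK)$ with $\KK L_{j-1}$ as $X^*$-graded vector spaces with canonical monomial basis $L_{j-1}$. Matching the two monomial bases gives
\[
L_{j-1} = \bigl[L(J_j) \cap L(J_{j-1} F_+)\bigr] \setminus \bigl[L(J_{j+1}) \cup L(J_j F_+)\bigr].
\]

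The final step is routine substitution: setting $j = 2k$ yields the formula for $L_{2k-1}$, and setting $j = 2k+1$ yields the formula for $L_{2k}$, recovering both identities in (\ref{govform}). The main obstacle is the justification that the isomorphism in Lemma \ref{govorov} respects the $X^*$-multigrading; without this refinement one would only obtain equality of Hilbert series in each $\ZZ$-degree, which is strictly weaker than equality of monomial sets. Once the multigraded refinement is granted, the remainder is purely bookkeeping with the closure of the monomial-ideal/language dictionary under sums, intersections, and products.
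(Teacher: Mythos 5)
Your proposal is correct and follows essentially the same route as the paper: both invoke Lemma \ref{govorov}, upgrade its isomorphism to an isomorphism of $X^*$-graded vector spaces (justified by the fact that Govorov's resolution is built from inclusions and quotients of monomial ideals, hence lives in the category of $X^*$-graded modules), and then use the dictionary between monomial ideals and languages together with the one-dimensionality of $X^*$-homogeneous components to read off $L_{j-1}$ as the set difference of the monomial bases of $J_j\cap J_{j-1}F_+$ and $J_{j+1}+J_jF_+$. The parity bookkeeping at the end matches the paper's as well.
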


\begin{proof}
Let $I = \KK L\subset F$ and consider $A = F/I$ the corresponding monomial algebra.
It is useful to recall some arguments from the proof of Lemma 3 in \cite{Go2}.
The graded vector space isomorphisms of Lemma \ref{govorov} follow from the exact
sequence
\[
\ldots \to J_3/J_5 \to J_2/J_4 \to J_1/J_3 \to A \to \KK \to 0
\]
which is a graded free right $A$-module resolution of the base field $\KK$.
The maps above are induced by inclusions and quotients of ideals of $F$
which are indeed monomial ideals in our case. Therefore, one has a free
resolution in the category of $X^*$-graded right $A$-modules. It follows that
\[
\KK L_{k-1} \approx \Tor_k^A(\KK,\KK) \approx
(J_k\cap J_{k-1} F_+) / (J_{k+1} + J_k F_+)
\]
are in fact isomorphisms of $X^*$-graded vector spaces. Note that $F_w = \KK w$
is the (monodimensional) graded component of $F$ corresponding to a word $w\in X^*$. 
Therefore, we have that the corresponding graded component of $\KK L_{k-1}$
is either $\KK w$ if $w\in L_{k-1}$ or zero otherwise. Since
$(J_k\cap J_{k-1} F_+) / (J_{k+1} + J_k F_+)$ is the quotient of two monomial
ideals of $F$, from the above isomorphisms it follows that a word $w\in X^*$
belongs to $L_{k-1}$ if and only if it belongs to $B_1 \setminus B_2$,
where $B_1$ and $B_2$ are the monomial linear bases of the ideals
$J_k\cap J_{k-1} F_+$ and $J_{k+1} + J_k F_+$, respectively. Because
the monomial linear bases of the ideals $F_+,J_{2k}$ and $J_{2k+1}$ are clearly
the sets $X^+, L^k$ and $X^+ L^k$ respectively, we finally obtain the stated
formulas for the languages of chains.
\end{proof}

For $k = 1,2$, observe that one has the following simplified formulas
\[
\begin{array}{c}
L_1 = ((X^+)^2\cap L)\setminus (X^+ L \cup L X^+) = L \setminus (X^+ L \cup L X^+), \\
L_2 = (X^+ L \cap L X^+)\setminus (X^+ L X^+ \cup L^2) =
(X^+ L_1 \cap L_1 X^+)\setminus (L_1 X^* L_1).
\end{array}
\]

\begin{corollary}
\label{regchain}

Let $A$ be a monomial algebra. We have that $A$ is an automaton algebra
if and only if the 1-chain language $L_1(A)$ is a regular language.
In this case, all chain languages $L_k(A)\ (k\geq 0)$ are also regular.
\end{corollary}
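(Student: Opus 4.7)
The plan is to derive the corollary directly from Theorem \ref{govorov2} together with the closure properties of regular languages in Proposition \ref{closure}, without any new combinatorial input. The two auxiliary facts I will use constantly are: the basic alphabet-level languages $X^+$ and $X^*$ are regular, and regular languages are closed under union, concatenation, intersection, complement, and hence set difference $U\setminus V = U\cap V^c$.

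For the equivalence, I would prove both directions by manipulating only the $k=1$ formula extracted in Theorem \ref{govorov2}, namely $L_1 = L\setminus (X^+ L\cup L X^+)$, where $L = L(I) = X^* L_1 X^*$. If $A$ is automaton then $L$ is regular by definition, and since $X^+$ is regular the sets $X^+ L$ and $L X^+$ are regular (closure under concatenation); their union is regular, and finally $L_1$ is regular as a set-theoretic difference of regular languages. Conversely, if $L_1$ is regular then $L = X^* L_1 X^*$ is regular by the same closure under concatenation, so $A$ is an automaton algebra.

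For the last clause, assume equivalently that $L$ is regular. I would then argue inductively that each $L^k$ is regular (closure under product), so by Theorem \ref{govorov2} the languages
\[
L_{2k} = (X^+ L^k\cap L^k X^+)\setminus (X^+ L^k X^+ \cup L^{k+1}),
\]
\[
L_{2k-1} = (X^+ L^{k-1} X^+ \cap L^k)\setminus (X^+ L^k \cup L^k X^+)
\]
are obtained from regular languages by finitely many applications of concatenation with $X^+$, intersection, union, and set difference. Invoking Proposition \ref{closure} once more, each $L_k(A)$ is regular, completing the proof.

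There is essentially no hard step here: the genuine content is in Theorem \ref{govorov2}, which reduces the description of all higher chain languages to operations under which the class of regular languages is closed. The only mild care needed is that set difference is not one of the primitive regular operations but is available for regular languages specifically because Proposition \ref{closure} gives closure under both intersection and complement; this is the one place where the argument could not simply be transplanted to general context-free languages.
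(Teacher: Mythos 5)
Your proposal is correct and follows essentially the same route as the paper: both directions of the equivalence come from $L_1 = L\setminus(X^+L\cup LX^+)$ together with $L = X^*L_1X^*$ and the closure properties of Proposition \ref{closure}, and the regularity of all higher $L_k(A)$ follows from the formulas (\ref{govform}) of Theorem \ref{govorov2} plus closure under product, union, intersection and complement. Your extra remark that set difference is available only because regular languages are closed under both intersection and complement is a correct (and appropriately noted) refinement of what the paper leaves implicit.
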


\begin{proof}
Put $L_k = L_k(A)$, for all $k$. For the characterization of automaton algebras,
note that $L = X^* L_1 X^*$ and $L_1 = L \setminus (X^+ L \cup L X^+)$ where
$L = L(I)$. Then, the closure properties of regular languages of Proposition \ref{closure}
imply that $L$ is regular if and only if $L_1$ is regular. Moreover, by the same
properties and by the formulas (\ref{govform}) we have that the powers $L^k$
and therefore the chain languages $L_k$ are also regular languages, for all $k$.
\end{proof}

In Section \ref{CSsec} we have explained that the rational Hilbert series $\HS(A)$
of an automaton algebra $A = F/I$ can be obtained by applying Algorithm \ref{orbit}
to the regular language $L(I) = X^* L_1 X^*$ where $L_1$ is a minimal monomial
basis of $I$. This is usually the most effective way to compute $\HS(A)$
as explained in \cite{LS2,LST}. Nevertheless, if $\gd(A) < \infty$ and one computes
by Algorithm \ref{orbit} the regular grammars $G_k$ of all regular chain languages
$L_k = L_k(A)$, the rational generating functions $\gamma(L_k)$ are obtained by
solving the linear systems $S(G_k)$ and the Hilbert series $\HS(A)$ is given
by the formula (\ref{hilbhom}).

In contrast with the regular case, it is easy to find a 1-chain language
$L_1$ which is an unambiguous cf-language but the corresponding 2-chain
language $L_2$ is not even context-free.

\begin{Example}
\label{countex}

Let $X = \{x,y,z\}$ and consider the minimal monomial basis
\[
L_1 = \{x^ny^nz\mid n\geq 2\} \cup \{xy^nz^n\mid n\geq 2\}.
\]
Since the two sets above are disjoint, one has immediately that
$L_1$ is an unambiguous cf-language which is generated by the linear
grammar $G_1 = (V,X,P,S)$ where $V = \{S, A, B\}$ and
\[
P = \{S\to A z\mid x B, A\to x^2y^2\mid x A y, B\to y^2z^2\mid y B z\}.
\]
We have therefore that $\gamma(L_1)$ is a rational function.
One can easily compute the 2-chain language $L_2 = \{x^ny^nz^n\mid n\geq 2\}$
which is not a context-free language because it does not satify the context-free
pumping lemma (see, for instance, \cite{HMU}). Nevertheless, the generating function
$\gamma(L_2)$ is clearly a rational one and since the global dimension is exactly 3,
we conclude that the corresponding Hilbert series
\[
H = \left( 1 - n t + \gamma(L_1) - \gamma(L_2) \right)^{-1}
\]
is a rational function.
\end{Example}


\section{Unambiguous monomial algebras}
\label{unsec}

The aim to construct monomial algebras having algebraic Hilbert series which
are computable by means of Theorem \ref{CSth}, motivate the following definition.

\begin{definition}
Let $A$ be a monomial algebra where $L_1 = L_1(A)\subset (X^+)^2$. We call
$A$ a {\em homologically unambiguous monomial algebra}, briefly an {\em unambiguous
algebra}, if all chain languages $L_i(A)\ (i\geq 1)$ are unambiguous cf-languages.
\end{definition}

By Corollary \ref{regchain}, it is clear that the class of unambiguous algebras
generalizes the class of automaton algebras.

\begin{proposition}
\label{ufingd}
Let $A$ be an unambiguous algebra having finite global dimension.
Then, the Hilbert series $\HS(A)$ is an algebraic function.
\end{proposition}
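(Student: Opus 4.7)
The plan is to invoke the homological Hilbert series formula (\ref{hilbhom}) and then apply the Chomsky-Sch\"utzenberger theorem termwise. Since $\gd(A) = k+1 < \infty$, the resolution (\ref{resol}) terminates at length $k$, so
\[
\HS(A) = \left( 1 - n t - \sum_{i=1}^k (-1)^i \gamma(L_i) \right)^{-1}
\]
is a \emph{finite} algebraic combination of the generating functions $\gamma(L_i)$ of the chain languages $L_i = L_i(A)$, together with the polynomial $1 - nt$.

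The key step is to observe that each summand $\gamma(L_i)$ is an algebraic function over $\KK(t)$. By hypothesis $A$ is homologically unambiguous, which means that for every $i\geq 1$ the chain language $L_i$ is an unambiguous context-free language; hence there exists an unambiguous cf-grammar $G_i$ with $L(G_i) = L_i$. Theorem \ref{CSth} then produces an algebraic system $S(G_i)$ of which the tuple of generating functions (including $\gamma_i = \gamma(L_i)$) is a solution, and guarantees that $\gamma_i$ is algebraic over $\KK(t)$. In practice, a univariate annihilating polynomial $p_i(\gamma_i) = 0$ with $p_i \in \KK(t)[A]$ can be extracted by a lexicographic \Gr\ basis computation as in Theorem \ref{lexgb}, but for the statement of the proposition only the existence of such a $p_i$ is needed.

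Finally, the conclusion follows from the standard closure property that the algebraic elements over $\KK(t)$ form a subfield of any field extension of $\KK(t)$. Hence the finite sum $1 - nt - \sum_{i=1}^k (-1)^i \gamma(L_i)$ is algebraic, and its multiplicative inverse $\HS(A)$ is algebraic as well.

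There is no real obstacle: the hypothesis of finite global dimension is exactly what turns the (potentially infinite) alternating sum in the Euler--Poincar\'e identity of the homology of $A$ into a finite expression, so that closure of algebraic functions under finite field operations suffices. The only mild subtlety to flag is that unambiguity of each $L_i$ is essential: ambiguous cf-grammars need not yield algebraic generating functions because the correspondence between words and leftmost derivations in Theorem \ref{CSth} would fail, so one cannot relax ``unambiguous algebra'' to merely requiring that each $L_i$ be context-free.
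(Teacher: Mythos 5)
Your argument is correct and matches the paper's own proof: the paper likewise combines the formula (\ref{hilbhom}), valid because $\gd(A)<\infty$ makes the alternating sum finite, with Theorem \ref{CSth} giving algebraicity of each $\gamma(L_i)$, and concludes by closure of algebraic elements over $\KK(t)$ under field operations. Your additional remarks on extracting annihilating polynomials and on the necessity of unambiguity are consistent with the paper's surrounding discussion but not needed for the proposition itself.
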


\begin{proof}
Put $k = \gd(A) - 1$ and denote by $L_i = L_i(A)$ ($1\leq i\leq k$).
By Theorem \ref{CSth}, we have that each $\gamma(L_i)$ is an algebraic function,
that is, an algebraic element over the rational function field $\KK(t)$.
By the formula (\ref{hilbhom}), we obtain hence that $\HS(A)$ is also
an algebraic element.
\end{proof}

Under the hypothesis of the above result, we may want to compute in practice
an algebraic equation satisfied by $\HS(A)$. To this purpose, we can proceed
in the following way. If $G_i$ is an unambiguous cf-grammar corresponding
to $L_i$, we construct the corresponding algebraic system $S(G_i)$ over
disjoint sets of commutative variables, for each index $i$. Then, let $E_i$
be the start variable of $G_i$ and consider the linear equation
\begin{equation}
\label{eceq}
E = 1 - n t - \sum_{i=1}^k (-1)^i E_i.
\end{equation}
Clearly, the variable $E$ corresponds to the Euler characteristic $1/\HS(A)$.
Denote by $S$ the algebraic system obtained by joining all the equations
in $S(G_i)$ ($1\leq i\leq k$) together with equation (\ref{eceq}). Note that
the coefficients of the commutative polynomials in $S$ are in the rational
function field $\KK(t)$. By eliminating in $S$ for the variable $E$, we obtain
a polynomial $p(E)$ with one root equal to $1/\HS(A)$. According to Theorem \ref{lexgb},
we may use to this purpose the computation of a \Gr\ basis where $E$ is
the lowest variable in some lexicographic monomial ordering. The required
polynomial for $\HS(A)$ is therefore $q(H) = H^d p(1/H)$ where $d = \deg(p)$.
We will illustrate this method by the examples contained in Section \ref{monsec}
and Section \ref{fpsec}.

We provide now general results which are useful to construct a large class
of unambiguous algebras. The following notations are assumed in all these results.
Let $X = \{x_1,\dots,x_n\}$ and $Y = \{y_1,\dots,y_m\}$ be two disjoint sets
of variables and put $Z = X\cup Y$. Then, we consider the free associative algebra
$F = \KK\langle Z\rangle$.

\begin{lemma}
\label{ulang}
Fix $k\geq 1$ and let $d_i\geq 0$, for any $1\leq i\leq k$. Then, let $R_0\subset X^*,
R_{ij}\subset X^+\, (1\leq j\leq d_i + 1)$ and $L_{ij}\subset Y^+\, (1\leq j\leq d_i)$
be unambiguous cf-languages, for all $i$. Moreover, assume that the set
\[
L = R_0\cup \bigcup_{1\leq i\leq k} R_{i1} L_{i1} R_{i2} \cdots
R_{id_i} L_{id_i} R_{i{d_i+1}}
\]
is a disjoint union. Then, $L$ is an unambiguous cf-language.
\end{lemma}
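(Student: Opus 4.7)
The plan is to construct an unambiguous cf-grammar $G$ for $L$ by assembling unambiguous cf-grammars for the component languages. Pick unambiguous cf-grammars $G_0=(V_0, X, P_0, S_0)$ for $R_0$, $G_{ij}^R=(V_{ij}^R, X, P_{ij}^R, S_{ij}^R)$ for each $R_{ij}$, and $G_{ij}^L=(V_{ij}^L, Y, P_{ij}^L, S_{ij}^L)$ for each $L_{ij}$, relabeling so that all nonterminal sets are pairwise disjoint and disjoint from $Z$. Introduce fresh nonterminals $S, T_1,\ldots, T_k$. Let $G=(V, Z, P, S)$ with $V$ the union of all the nonterminal sets together with $\{S,T_1,\ldots,T_k\}$, and $P$ the union of $P_0$ and all the $P_{ij}^R, P_{ij}^L$, augmented by the productions $S \to S_0 \mid T_1 \mid \cdots \mid T_k$ and
\[
T_i \to S_{i1}^R\, S_{i1}^L\, S_{i2}^R \cdots S_{id_i}^R\, S_{id_i}^L\, S_{i(d_i+1)}^R
\]
(which for $d_i=0$ reduces to $T_i\to S_{i1}^R$). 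A direct induction on leftmost derivations gives $L(G)=L$ from the descriptions of the sub-languages.

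Next I would establish unambiguity. Let $w\in L$. Since $L$ is a disjoint union, $w$ lies in exactly one component, so the first production applied to $S$ in any leftmost derivation $S\lsto w$ is forced. The crucial combinatorial point is that if $w$ lies in the $i$-th product, then the decomposition $w = u_1 v_1 u_2 v_2\cdots u_{d_i} v_{d_i} u_{d_i+1}$ with $u_j\in R_{ij}\subset X^+$ and $v_j\in L_{ij}\subset Y^+$ is uniquely determined: because $X$ and $Y$ are disjoint and every $u_j,v_j$ is nonempty, the block boundaries in the decomposition coincide with the alternations between the maximal $X$-runs and $Y$-runs of $w$, and there is exactly one way to split it.

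Once this factorization is fixed, the leftmost derivation of $w$ continuing with $T_i\to S_{i1}^R S_{i1}^L\cdots$ must first derive $u_1$ from $S_{i1}^R$, then $v_1$ from $S_{i1}^L$, and so on; indeed, as long as a nonterminal from $V_{ij}^R$ (resp.\ $V_{ij}^L$) remains in the current sentential form, it is the leftmost nonterminal, because the sub-grammars have disjoint nonterminal alphabets and the sibling factors to its right are still unexpanded nonterminals. Hence the leftmost derivation in $G$ restricts on each block to a leftmost derivation in the corresponding unambiguous sub-grammar, which is unique by hypothesis. Concatenating these unique sub-derivations yields the unique leftmost derivation $S\lsto w$, so $G$ is unambiguous.

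The main obstacle is essentially notational: there is no hard technical core beyond the remark that a concatenation of nonempty blocks over strictly alternating disjoint alphabets has a unique parsing into factors. The only care required is the disjoint relabeling of nonterminals, which ensures that no cross-interaction between sub-grammars can introduce a second leftmost derivation.
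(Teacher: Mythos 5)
Your proposal is correct and follows essentially the same route as the paper's proof: form one grammar from disjointly relabelled unambiguous sub-grammars with a new start rule, use the disjointness of the union to force the top-level production, use the strict alternation of the disjoint alphabets $X$ and $Y$ to force the block factorization, and concatenate the unique leftmost sub-derivations. Your only deviations are cosmetic (the intermediate nonterminals $T_i$, each with a single production, and the explicit maximal-run justification of the unique factorization, which the paper leaves implicit).
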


\begin{proof}
Let $S_0, S_{ij}, S'_{ij}$ be the start variables of the unambiguous context-free
grammars $G_0, G_{ij}, G'_{ij}$ corresponding to $R_0, R_{ij}, L_{ij}$, respectively.  
A cf-grammar $G$ generating $L$ is clearly obtained by adding to all productions
of the cf-grammars $G_0, G_{ij}, G'_{ij}$ the new rule
\[
S\to S_0\mid S_{11} S'_{11} S_{12} \cdots S_{1d_1} S'_{1d_1} S_{1{d_1+1}}\mid
\ldots\mid S_{k1} S'_{k1} S_{k2} \cdots S_{kd_k} S'_{kd_k} S_{k{d_k+1}}.
\]
We show that the cf-grammar $G$ is also unambiguous. Let $w\in L = L(G)$.
Since $L$ is a disjoint union, either $w\in R_0$ or
$w\in R_{i1} L_{i1} R_{i2} \cdots R_{id_i} L_{id_i} R_{i{d_i+1}}$ for a unique
index $i$. If $w\in L_0$ and $S_0\lsto w$ is the unique leftmost derivation
of $w$ in $G_0$, then $w$ has the following unique leftmost derivation in $G$
\[
S\to S_0\lsto w.
\]
Otherwise, if $w\in R_{i1} L_{i1} R_{i2} \cdots R_{id_i} L_{id_i} R_{i{d_i+1}}$
then one has the unique factorization
\[
w = w_1 w'_1 w_2 \cdots w_{d_i} w'_{d_i} w_{d_i+1}
\]
with $w_j\in R_{ij}\subset X^+$ and $w'_j\in L_{ij}\subset Y^+$. Let $S_{ij}\lsto w_j$ and
$S'_{ij}\lsto w'_j$ be unique leftmost derivations in $G_{ij}$ and $G'_{ij}$,
respectively. Then, the unique leftmost derivation of $w$ in $G$ is
\begin{equation*}
\begin{gathered}
S\to S_{i1} S'_{i1} S_{i2} \cdots S_{id_i} S'_{id_i} S_{id_i+1} \lsto 
w_1 S'_{i1} S_{i2} \cdots S_{id_i} S'_{id_i} S_{id_i+1} \lsto \\
w_1 w'_1 S_{i2} \cdots S_{id_i} S'_{id_i} S_{id_i+1} \lsto \ldots \lsto 
w_1 w'_1 w_2 \cdots w_{d_i} w'_{d_i} S_{id_i+1} \lsto w.
\end{gathered}
\end{equation*}
\end{proof}

\begin{theorem}
\label{uchain}

Let $L\subset Y^+$ be an unambiguous context-free language and let $R_0\subset X^*,
R_1, R'_1, \ldots, R_k, R'_k \subset X^+$ be regular languages such that
their disjoint union
\[
L'_1 = R_0 \cup R_1 \cup R'_1 \cup \cdots \cup R_k \cup R'_k
\]
is a minimal monomial basis. Consider the monomial algebra $A = F/I$, where
the monomial ideal $I$ is generated by 
\[
L_1 = R_0 \cup R_1 L R'_1 \cup \cdots\cup  R_k L R'_k.
\]
Then, $A$ is homologically unambiguous.
\end{theorem}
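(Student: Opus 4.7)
My plan is to reduce the theorem to the automaton case by collapsing each copy of $L$ to a single auxiliary letter. Introduce a fresh symbol $*\notin Z$ and form the monomial algebra
\[
\tilde A = \KK\langle X\cup\{*\}\rangle \big/ \langle R_0 \cup R_1\{*\}R_1' \cup \cdots \cup R_k\{*\}R_k'\rangle.
\]
The defining set $\tilde L_1 := R_0 \cup \bigcup_{i=1}^k R_i\{*\}R_i'$ is a concatenation-and-union of regular languages over $X\cup\{*\}$, hence regular. A short case check, using the disjointness of the summands of $L_1'$ and its antichain property, shows that $\tilde L_1$ is itself an antichain and therefore is the minimal monomial basis of $\tilde A$. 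Thus $\tilde A$ is an automaton algebra, and by Corollary \ref{regchain} every chain language $L_i(\tilde A)$ is regular.

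The key structural observation is that any overlap $wu=vw'$ between two 1-chains $w,w'\in L_1(A)$ (with $|u|<|w'|$ and $|v|\geq 1$) is confined to the $X$-parts of $w$ and $w'$. Indeed, because the middle $Y^+$-block of a 1-chain lies in a disjoint alphabet from its $X^+$-flanks, no letter of $Y$-type can be identified with one of $X$-type, and this rules out every alignment other than an $X$-on-$X$ match. The only remaining possibility would be a ``deep overlap'' in which the $Y$-blocks of $w$ and $w'$ coincide; by the forced pattern $X^+Y^+X^+$, such an alignment would require one of $R_j$ or $R_j'$ (or $R_0$) to be a proper subword of another element of $L_1'$, contradicting its antichain property. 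An identical argument, with $*$ playing the role of $Y^+$, shows that overlaps in $\tilde A$ are likewise confined to $X$-parts.

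Define the substitution $\sigma\colon(X\cup\{*\})^*\to 2^{(X\cup Y)^*}$ by $\sigma(x)=\{x\}$ for $x\in X$ and $\sigma(*)=L$, extended multiplicatively to words and unionwise to languages. I claim $L_i(A)=\sigma(L_i(\tilde A))$ for every $i\geq 1$, which I prove by induction. The case $i=1$ is immediate from the definition. For the inductive step, the construction of $L_{i+1}$ in Section \ref{homsec} builds prechains $wu=vw'$ out of $i$-chains $w=st$ and 1-chains $w'$ under overlap and minimality conditions; by the structural observation, the words $u,v$ involve only $X$-letters, so these conditions reference only $X$-parts and hence agree for $A$ and for $\tilde A$. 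Consequently each $(i+1)$-chain of $\tilde A$ yields, by substituting each occurrence of $*$ independently with any word of $L$, an $(i+1)$-chain of $A$; and every $(i+1)$-chain of $A$ arises this way, since its maximal $Y$-blocks are uniquely determined and each belongs to $L$. Now let $\tilde G_i$ be a regular (hence unambiguous cf-) grammar for $L_i(\tilde A)$, furnished by Algorithm \ref{orbit}, and let $G_L$ be an unambiguous cf-grammar for $L$. Form the cf-grammar $G_i$ for $L_i(A)$ by taking the productions of $\tilde G_i$ and $G_L$ on disjoint nonterminal sets and replacing each terminal $*$ in productions of $\tilde G_i$ with the start variable of $G_L$. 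Any leftmost derivation of $w\in L_i(A)$ in $G_i$ must first produce the skeleton $\tilde w\in L_i(\tilde A)$ (uniquely, because $\tilde G_i$ is regular) and then derive each $Y$-block of $w$ from the corresponding $*$ via $G_L$ (uniquely, by unambiguity of $G_L$); since the maximal $Y$-blocks of $w$ are recovered from $w$ alone, the skeleton $\tilde w$ is uniquely determined, and hence so is the full derivation.

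The most delicate step is the verification of the identity $L_i(A)=\sigma(L_i(\tilde A))$ in the inductive step: one must track, through the ``prefix $(i-1)$-chain $s$, tail $t$'' decomposition of Section \ref{homsec}, that both the overlap condition (including the lower bound $|v|\geq|s|$) and the subsequent passage to a minimal basis depend only on the $X$-letters and therefore commute with $\sigma$. Once this bookkeeping is in place, the remainder of the argument is a routine assembly of the grammar $G_i$ from $\tilde G_i$ and $G_L$.
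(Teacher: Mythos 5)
Your argument is correct in substance, but it takes a genuinely different route from the paper. The paper proves the theorem by explicitly describing each chain language: adapting the formulas of Theorem \ref{govorov2}, it introduces the auxiliary automaton algebra $B=\KK\langle X\rangle/\langle R_0\rangle$ and regular overlap languages $C^i_q$, ${C'}^i_q$, $Q^{ij}_q$, shows that $L_q(A)$ is the disjoint union of $L_q(B)$ with products of the form $C^{i_1}_{q_1}\,L\,Q^{i_1i_2}_{q_2}\,L\cdots L\,{C'}^{i_d}_{q_{d+1}}$, and then concludes by Lemma \ref{ulang}. You instead collapse $L$ to a fresh letter $*$, reduce to the automaton algebra $\tilde A$, get regularity of the skeleton chain languages from Corollary \ref{regchain}, prove the substitution identity $L_i(A)=\sigma(L_i(\tilde A))$, and transfer unambiguity by plugging the unambiguous grammar of $L$ into the deterministic regular grammar produced by Algorithm \ref{orbit}. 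Your reduction is arguably cleaner and avoids writing the explicit chain formulas (in effect you re-prove the needed instance of Lemma \ref{ulang} through grammar substitution), whereas the paper's computation has the advantage of giving an explicit structural description of the chains, parallel to the one exploited in Theorem \ref{uchain2}.

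Two points in your write-up need repair, though neither is fatal. First, the sentence asserting that ``the words $u,v$ involve only $X$-letters'' is false: when the overlap of $w$ with $w'$ sits inside the $X$-prefix of $w'$, the word $u$ contains the whole $Y$-block of $w'$ (and $v$ may contain $Y$-blocks of $w$). What is true, and what your induction actually needs, is that the overlap region (the common suffix of $w$ and prefix of $w'$) lies in $X^*$; combined with the fact that any occurrence of a 1-chain $r_i\,l\,r'_i$ inside a word of the form $\sigma(\tilde w)$ must have $l$ equal to a full maximal $Y$-block, this gives a position-preserving bijection between 1-chain occurrences in $\sigma(\tilde w)$ and skeleton 1-chain occurrences in $\tilde w$, from which the conditions $|u|<|w'|$, $|v|\geq|s|$ and the minimality pruning transfer. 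Second, for $i$-chains with $i\geq 2$ the exclusion of ``deep overlaps'' (where the $Y$-blocks of $w$ and $w'$ align) cannot rest on the antichain property of $L'_1$ alone, because the $X$-blocks of a higher chain are merged words rather than elements of $L'_1$; one needs in addition the antichain property of $L_1$ itself together with the condition $|v|\geq|s|$ (such an alignment forces $w'$ to be the defining 1-chain suffix of $w$, whose occurrence starts before position $|s|$, hence produces no prechain). Finally, your grammar-substitution step implicitly uses that consecutive $Y$-blocks of a chain are always separated by a nonempty $X$-word, so that the skeleton of $w$ is well defined and a leftmost derivation cannot regroup blocks; this holds because consecutive relation occurrences overlap in nonempty $X$-words, and it is worth stating. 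With these repairs your proof goes through.
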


\begin{proof}
Since $L'_1$ is a minimal monomial basis, we have immediately that $L_1$ is also
a minimal monomial basis and hence $L_1(A) = L_1$.
Moreover, by the closure properties of cf-languages of Proposition \ref{closure},
one has that $L_1$ is a cf-language. Finally, by Lemma \ref{ulang}
we conclude that $L_1$ is unambiguous. 

For $q\geq 2$, consider the subset $S_q$ of $L_q = L_q(A)$ of the $q$-chains having
the form
\[
u l r'_i, r_i l v\ \mbox{and}\ r_i l w l' r_j,
\]
where $r_i\in R_i, r'_i\in R'_i\, (1\leq i\leq k), l,l'\in L$ and $u,v,w\in X^*$.
This subset $S_q$ can be described by formulas that are similar to (\ref{govform}).
Namely, if $M = L(I) = Z^* L_1 Z^*$ and $T = M\cap X^* = X^* R_0 X^*$, one has to replace
in (\ref{govform}) all occurrences of the powers $M^t$ with the set
\[
\sum_{i=1}^k\sum_{j=1}^t T^{j-1} R_i L R'_i T^{t-j}. 
\]
It follows that the set $S_q$ is the union of the languages
\[
C^i_q L R'_i, R_i L {C'}^i_q\ \mbox{and}\ R_i L Q^{ij}_q L R'_j,
\]
where the sets $C^i_q$ are defined as
\[
\begin{array}{rcl}
C_{2k}^i & = & (X^+ T^{k-1} R'_i \cap T^k X^+)\setminus (X^+ T^k X^+ \cup T^k R'_i); \\
C_{2k-1}^i & = & (X^+ T^{k-1} X^+ \cap T^{k-1} R'_i) \setminus
(X^+ T^{k-1} R'_i \cup T^k X^+),
\end{array}
\]
the sets ${C'}^i_q$ are defined by similar formulas where $R'_i$ is replaced by $R_i$ and
the sets $Q^{ij}_q$ are obtained as
\[
\begin{array}{lcl}
Q^{ij}_{2k} & = & (X^+ T^{k-1} R'_j \cap R_i T^{k-1} X^+)\setminus
(X^+ T^k X^+ \cup R_i T^{k-1} R'_j); \\
Q^{ij}_{2k-1} & = & (X^+ T^{k-1} X^+ \cap  R_i T^{k-2} R'_j) \setminus
(X^+ L^{k-1} R'_j \cup R_i T^{k-1} X^+).
\end{array}
\]
Observe that the languages $C^i_q, {C'}^i_q, Q^{ij}_q\, (1\leq i,j\leq k)$ are regular
owing to the closure properties of the regular languages. 

Consider now the automaton algebra $B = \KK\langle X\rangle/J$, where $J$ is the
monomial ideal generated by $R_0$. It is clear that $L_q = L_q(A)$ contains $L_q(B)$.
Moreover, a chain $w\in L_q\setminus L_q(B)$ either belongs to the languages
$C^i_q, {C'}^i_q, Q^{ij}_q$ or it contains some overlapping subwords belonging
to these sets in such a way that the whole $w$ is covered by these subwords. 
We conclude that the set $L_q$ is the disjoint union of the regular language $L_q(B)$
and the sets 
\[
C^{i_1}_{q_1} L Q^{i_1 i_2}_{q_2} L \cdots L Q^{i_{d-1} i_d}_{q_d} L C'^{i_d}_{q_{d+1}},
\]
where $q_1 + \cdots + q_{d+1} = q$. Now, the result follows from Lemma \ref{ulang}.
\end{proof}

According to Proposition \ref{ufingd}, an unambiguous algebra of the above class
having finite global dimension holds an algebraic Hilbert series. 
We conclude this section by showing that in the class of unambiguous
algebras of Theorem \ref{uchain}, there are algebras with infinite global
dimension which have also (computable) algebraic Hilbert series.

\begin{theorem}
\label{uchain2}

Let $L \subset Y^+$ be an unambiguous cf-language and let $R, R'\subset X^+$ be two
regular languages such that their disjoint union $L'_1 = R\cup R'$ is a minimal
monomial basis. Consider the monomial algebra $A = F/I$ where the monomial ideal $I$
is generated by $L_1 = R L R'$. Then, $A$ is an unambiguous algebra such that
$\gd(A) = \infty$ and $\HS(A)$ is an algebraic function which is rationally dependent
on the generating function $\gamma = \gamma(L)$, that is, $\HS(A)$ is an element of the
algebraic extension $\KK(t)(\gamma)$.
\end{theorem}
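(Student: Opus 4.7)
The first assertion, that $A$ is homologically unambiguous, is the case $k=1$ of Theorem~\ref{uchain} applied with $R_0=\emptyset$, $R_1=R$ and $R'_1=R'$.

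For $\gd(A)=\infty$, I would analyze the possible overlaps between two 1-chains $w=rlr'$ and $w'=\tilde r\tilde l\tilde r'$. Since $X\cap Y=\emptyset$, the $Y$-block $l$ of $w$ and $\tilde l$ of $w'$ are unique and must coincide in the equation $wu=vw'$, forcing $l=\tilde l$. The alternative in which $|v|<|r|$ would require a proper suffix of $r\in R$ to lie in $R$, contradicting the antichain hypothesis on $R\cup R'$; the cases where $v$ or $u$ introduces spurious $Y$-characters are likewise excluded by the block structure. The admissible overlaps therefore require an overlap word $r''\in X^+$ that is a proper suffix of some $r'\in R'$ and a proper prefix of some $\tilde r\in R$; in that case the 2-chain takes the shape $r\,l\,r'\tilde r''\,\tilde l\,\tilde r'$, and its rightmost block $\tilde r'\in R'$ can play the role of $r'$ for a further overlap, so the construction iterates and yields chains of every degree.

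For the Hilbert series I would compute $\HS(A)=\gamma(N(I))$ directly by describing the normal-words language $N(I)$ via a finite automaton on blocks. Every word of $Z^*$ admits a unique alternating factorization into maximal $X^+$- and $Y^+$-blocks, and by the alphabet disjointness any occurrence of a pattern $rlr'\in L_1$ aligns $l$ with an entire $Y^+$-block and places $r, r'$ as suffix and prefix of the flanking $X^+$-blocks. The antichain property implies that each $X^+$-block has at most one suffix in $R$ and at most one prefix in $R'$, splitting $X^+$ into four regular sublanguages with rational generating functions in $t$.

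I then set up a finite-state automaton whose states record the type of the last emitted block together with the data needed to detect forbidden patterns: whether the last $X^+$-block ends in $R$, and whether the last $Y^+$-block lies in $L$ and follows an $R$-ending $X^+$-block (the ``dangerous'' state, from which the next $X^+$-block must avoid starting with an element of $R'$). The transition weights are rational in $t$ on the $X^+$-arcs, and lie in $\KK(t)+\KK(t)\gamma$ on the $Y^+$-arcs since $\gamma(Y^+\setminus L)=\gamma(Y^+)-\gamma$. The resulting finite linear system has coefficients in $\KK(t)[\gamma]$, so its unique solution, and in particular $\HS(A)$, lies in $\KK(t)(\gamma)$; algebraicity over $\KK(t)$ then follows at once from Theorem~\ref{CSth} applied to $\gamma$. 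The main obstacle will be to verify rigorously that this finite automaton accepts exactly $N(I)$, i.e.\ that the ``dangerous'' state together with the $R'$-prefix prohibition captures every pattern $rlr'\in L_1$ with no further states needed; the key ingredient that makes finitely many states sufficient is the disjointness $X\cap Y=\emptyset$, which localizes each forbidden pattern to three consecutive blocks.
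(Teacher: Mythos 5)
Your proposal is correct in substance, but for the Hilbert series it takes a genuinely different route from the paper. The unambiguity claim is handled exactly as in the paper (Theorem \ref{uchain} with $k=1$, $R_0=\emptyset$), and your treatment of $\gd(A)=\infty$ is essentially the paper's argument made informally: the paper turns your overlap analysis into the explicit formula $L_k = R\,(L\,Q)^{k-1}L\,R'$ for $k\geq 2$, where $Q$ is the regular language of minimal overlaps between the trailing $R'$-factor of one relation and the leading $R$-factor of the next (note that both your sketch and the paper's proof tacitly assume such an overlap exists; if no proper suffix of a word of $R'$ is a proper prefix of a word of $R$, then $L_2=\emptyset$ and $\gd(A)=3$, so this is a shared caveat, not a defect of yours). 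Where you genuinely diverge is the computation of the series: the paper never looks at normal words, but takes the Euler characteristic of the infinite minimal resolution, $\HS(A)^{-1} = 1-(n+m)t-\sum_{k\geq 1}(-1)^k\gamma(L_k)$, and sums the resulting geometric series to obtain $\HS(A)^{-1} = 1-(n+m)t+\gamma(R)\gamma(R')\gamma(L)\big/\bigl(1+\gamma(Q)\gamma(L)\bigr)\in\KK(t)(\gamma)$, with $\gamma(R),\gamma(R'),\gamma(Q)$ rational; you instead compute $\HS(A)=\gamma(N(I))$ directly by a transfer-matrix argument on the unique alternating block factorization, with $X$-block weights rational and $Y$-block weights in $\KK(t)+\KK(t)\gamma$. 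Your route buys independence from the homological bookkeeping: it yields $\HS(A)\in\KK(t)(\gamma)$ without identifying the chain languages, and the verification you flag does go through, precisely because any occurrence of $rlr'$ must align $l$ with a full maximal $Y$-block and $r,r'$ with a suffix and a prefix of the adjacent $X$-blocks, so finitely many block types suffice. The paper's route buys an explicit closed formula for $\HS(A)^{-1}$ together with the explicit chain languages, which it needs anyway for the unambiguity and global-dimension statements, whereas your automaton argument contributes nothing to those two parts and must still be supplemented by the overlap analysis you sketch.
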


\begin{proof}
By Theorem \ref{uchain}, we have that $A$ is an unambiguous algebra where $L_1(A) = L_1$.
Denote $L_k = L_k(A)$ the $k$-chain language of $A$, for $k\geq 1$. By the definition
of 2-chains, we have that $L_2 = R L Q L R'$ where $Q$ is the language
of all minimal overlaps of the elements of $R$ with the elements of $R'$, that is
\[
Q = (R X^* \cap X^* R') \setminus R X^* R'.
\]
Note that $Q$ is a regular language. More generally, for $k\geq 2$, we have that
\[
L_k = R (L Q)^{k-1} L R'.
\]
All these languages are unambiguous by Lemma \ref{ulang}. By the homology of $A$,
we obtain that
\begin{equation*}
\begin{gathered}
\HS(A)^{-1} = 1 - (n + m) t - \sum_{k\geq 1}(-1)^k \gamma(L_k) = \\
1 - (n + m) t + \sum_{k\geq 1} \gamma(R) ( - \gamma(L) \gamma(Q) )^{k-1} \gamma(L) \gamma(R') = \\
1 - (n + m)t + \frac{\gamma(R) \gamma(R') \gamma(L)}{1 + \gamma(Q) \gamma(L)}.
\end{gathered}
\end{equation*}
Recall finally that the generating function $\gamma(R), \gamma(R'), \gamma(Q)$ are rational
functions because $R,R',Q$ are regular languages.
\end{proof}


\section{Monomial algebra examples}

\label{monsec}

We propose now a couple of examples of unambiguous monomial algebras
with finite global dimension to illustrate the proposed methods for computing
algebraic Hilbert series.

\begin{Example}

Fix $X = \{x,y,z,c\}$ and $Y = \{a,b\}$. We put $Z = X\cup Y$ and
$F = \KK\langle Z\rangle$. Consider the Lukasiewicz cf-grammar
$G = (V,Y,P,S)$ where $V = \{S\}$ and $P = \{S\to a \mid b S S\}$.
The corresponding cf-language $L = L(G)$ consists of the algebraic
expressions in Polish notation. For instance, these expressions
of length $\leq 7$ are the following ones
\[
a,
b a a,
b a b a a,
b b a a a,
b a b a b a a,
b a b b a a a,
b b a a b a a,
b b a b a a a,
b b b a a a a.
\]
It is well-known that $G$ and hence $L$ are unambiguous. Then, we consider
the language
\[
L_1 = \{x^2y, x^2z, xy^2, xyz, xzy, xz^2 \}\cup yz^2 L c.
\]
By Lemma \ref{ulang}, we have that $L_1$ is also an unambiguous cf-language
whose generating (unambiguous) cf-grammar is $G_1 = (V_1,Z,P_1,S)$ where
$V_1 = \{S,T\}$ and
\[
P_1 = \{S\to x^2y\mid x^2z\mid xy^2\mid xyz\mid xzy\mid xz^2\mid yz^2 T c,
T\to a \mid b T T\}.
\]
Moreover, the language $L_1$ is clearly a minimal monomial basis and we denote by
$I\subset F$ the two-sided ideal generated by $L_1$. Then, we consider the monomial
algebra $A = F/I$ for which we want to compute the Hilbert series $\HS(A)$ by means
of the formula (\ref{hilbhom}). We compute therefore the 2-chain language $L_2$
which is
\[
L_2 = \{x^2y^2, x^2yz, x^2zy, x^2z^2\} \cup \{xyz^2, xy^2z^2, xzyz^2\} L c.
\]
Finally, we can compute the 3-chain language $L_3$ as
\[
L_3 = \{x^2y^2z^2, x^2zyz^2\} L c.
\]
One has immediately that $L_4 = 0$, that is, $\gd(A) = 4$. In accordance with
Theorem \ref{uchain}, the sets $L_1,L_2,L_3$ are all unambiguous cf-languages, that is,
$A$ is an unambiguous algebra. Hence, we can apply Theorem \ref{CSth} to compute
the generating functions $\gamma(L_1),\gamma(L_2),\gamma(L_3)$. Denote by $E = 1/\HS(A)$
the inverse of the Hilbert series (Euler characteristic) of $A$ and let $E_i = \gamma(L_i)$
($1\leq i\leq 3$). By the formula (\ref{hilbhom}), we obtain the linear
equation $E = 1 - 6 t + E_1 - E_2 + E_3$. By applying Theorem \ref{CSth}
for the generating functions $E_i$, we obtain all together the algebraic system
\[
\left\{
\begin{array}{ccl}
E & = & 1 - 6 t + E_1 - E_2 + E_3, \\
E_1 & = & 6 t^3 + t^4 T, \\
E_2 & = & 4 t^4 + (t^5 + 2t^6) T, \\
E_3 & = & 2 t^7 T, \\
T & = & t + t T^2.
\end{array}
\right.
\]
Assume $\Char(\KK) = 0$. With respect to the lexicographic monomial ordering
of the polynomial algebra $R = \KK(t)[E,E_1,E_2,E_3,T]$ with
$E\prec E_1\prec E_2\prec E_3\prec T$, one computes the corresponding \Gr\ basis
\begin{equation*}
\begin{gathered}
E^2 + (- 2 t^6 + 2 t^5 + 9 t^4 - 13 t^3 + 12 t - 2) E +
(4 t^{14} - 8 t^{13} + 8 t^{11} - 11 t^{10} \\
+ 18 t^9 + 9 t^8 - 70 t^7 + 56 t^6 + 52 t^5 - 87 t^4 + 13 t^3 + 36 t^2 - 12 t + 1), \\
(2 t^3 - 2 t^2 - t + 1) E_1 - E + (- 12 t^6 + 12 t^5 + 2 t^4 - 6 t + 1), \\
(2 t^2 - 2 t - 1) E_2 + (2 t + 1) E_1 + (-2 t - 1) E + (- 8 t^6 - 12 t^2 - 4 t + 1), \\
E_3 - E_2 + E_1 - E + (- 6 t + 1), 2 t^7 T - E_3.
\end{gathered}
\end{equation*}
One easily obtains the roots of the first quadratic polynomial as
\[
1 - 6 t + \frac{13}{2} t^3 - \frac{9}{2} t^4 - t^5 + t^6
\pm t^3 (1 - t) (1 - 2 t^2) \frac{\sqrt{1 - 4 t^2}}{2}.
\]
We conclude that
\[
\HS(A) = 
\left(1 - 6 t + \frac{13}{2} t^3 - \frac{9}{2} t^4 - t^5 + t^6
- t^3 (1 - t) (1 - 2 t^2) \frac{\sqrt{1 - 4 t^2}}{2}\right)^{-1}
\]
since this function has a power series expansion with correct coefficients,
namely
\[
\HS(A) = 1 + 6 t + 36 t^2 + 210 t^3 + 1228 t^4 + 7175 t^5 + 41929 t^6 +
245017 t^7 + \ldots.
\]

\end{Example}


\begin{Example}

Let $X = \{x,y,z,c,d\}, Y = \{a,b\}$ and put $Z = X\cup Y,
F = \KK\langle Z\rangle$. Denote again by $L$ the Lukasiewicz language
on the alphabet $Y$ and define the set
\[
L_1 = c L \{x^2y, xyz, xzx\} \cup \{xy^2, y^2z, z^2y \} L d.
\]
From Lemma \ref{ulang} it follows that $L_1$ is an unambiguous cf-language
whose generating (unambiguous) cf-grammar $G_1$ has productions
\[
S\to c T x^2y\mid c T xyz\mid c T xzx\mid xy^2 T d, y^2z T d, z^2y T d,
T\to a \mid b T T.
\]
Observe that $L_1$ is a minimal monomial basis and denote $I$ the two-sided
ideal generated by $L_1$ and $A = F/I$ the corresponding monomial algebra.
We can easily compute the 2-chain language of $A$ as
\[
L_2 = c L \{x^2y^2, x^2y^2z, xyz^2y, xzxy^2\} L d
\]
which is generated by the unambiguous cf-grammar $G_2$ with productions
\[
S\to c T x^2y^2 T d\mid c T x^2y^2z T d\mid c T xyz^2y T d\mid c T xzxy^2 T d,
T\to a \mid b T T.
\]
Finally, it is clear that $L_3 = \emptyset$. In accordance with Theorem \ref{uchain},
we see that $A$ is an unambiguous algebra. Denote $E = 1/\HS(A)$ and put
$E_i = \gamma(L_i)$ ($1\leq i\leq 2$). Then, one has the linear equation
$E = 1 - 7 t + E_1 - E_2$. Now, by Theorem \ref{CSth} we obtain the following
algebraic system
\[
\left\{
\begin{array}{ccl}
E & = & 1 - 7 t + E_1 - E_2, \\
E_1 & = & 6 t^4 T, \\
E_2 & = & (t^6 + 3 t^7) T^2, \\
T & = & t + t T^2.
\end{array}
\right.
\]
Assuming $\Char(\KK) = 0$, we compute the corresponding \Gr\ basis
for the lexicographic monomial ordering of $R = \KK(t)[E,E_1,E_2,T]$
with $E\prec E_1\prec E_2\prec T$, namely
\begin{equation*}
\begin{gathered}
E^2 + (- 6 t^7 - 2 t^6 + 3 t^5 + t^4 - 6 t^3 + 14 t - 2) E +
(9 t^{14} + 6 t^{13} + t^{12} \\
- 18 t^{10} - 6 t^9 - 6 t^8 - 8 t^7 + 23 t^6 + 4 t^5 - 43 t^4 + 6 t^3 + 49 t^2 - 14 t + 1), \\
(3 t^2 + t - 6) E_1 + 6 E + (- 18 t^7 - 6 t^6 + 42 t - 6), \\
E_2 - E_1 + E + (7 t - 1), 6 t^4 T - E_1.
\end{gathered}
\end{equation*}
The Hilbert series of $A$ is the inverse of one of the roots of the first
quadratic polynomial, namely
{\small\[
\HS(A) = \left( 1 - 7 t + 3 t^3 - \frac{t^4}{2} - \frac{3 t^5}{2} + t^6 + 3 t^7 
- t^3 (6 - t - 3 t^2) \frac{\sqrt{(1 - 2 t)(1 + 2 t)}}{2} \right)^{-1}.
\]}
This is confirmed by its correct power series expansion
\[
\HS(A) = 1 + 7 t + 49 t^2 + 343 t^3 + 2401 t^4 + 16801 t^5 + 117565 t^6 +
822655 t^7 + \ldots.
\]

\end{Example}


\begin{Example}
We end this section with a simple example of an unambiguous algebra with infinite
global dimension but computable (algebraic) Hilbert series. Let $X = \{x\},
Y = \{a,b\}, Z = X\cup Y$ and $F = \KK\langle Z\rangle$. Consider the Dyck language
$L$ on the alphabet $Y$ and denote by $\gamma = \gamma(L)$ its generating function.
One computes easily that
\[
\gamma = \frac{1 - \sqrt{1 - 4 t^2}}{2 t^2}.
\]
Consider the unambiguous 1-chain language
\[
L_1 = x L x\subset Z^*.
\]
Let $I\subset F$ be the two-sided ideal generated by $L_1$ and put $A = F/I$.
For any $n\geq 1$, the (unambiguous) $n$-chain language of $A$ is clearly
\[
L_n = x ( L x )^n.
\]
We conclude that $\gd(A) = \infty$ and $\gamma(L_n) = t^{n+1} \gamma^n$.
In accordance with Theorem \ref{uchain2}, we finally obtain that
\[
\HS(A)^{-1} = 1 - 3 t + t^2\frac{\gamma}{(1 - t\gamma)} = 
\frac{1 - 6 t + 6 t^2 - (1 - 4 t)\sqrt{1 - 4 t^2}}{1 - 2 t - \sqrt{1 - 4 t^2}}.
\]
\end{Example}


\section{Finitely presented algebra examples}
\label{fpsec}

Finitely presented graded algebras are important structures in the noncommutative setting.
We now propose an example of such an algebra having an algebraic Hilbert series
which can be computed by the results and the methods proposed in this paper.
To find other examples of this kind, one has to go back to 1980-1981 \cite{Ko,Sh}
where the corresponding algebraic Hilbert series were computed by combinatorial
enumeration of normal words.

Fix $X = \{a',b',x,y\}, Y = \{a,b,e\}$ and put $Z = X\cup Y, F = \KK\langle Z\rangle$.
Consider the graded ideal $I\subset F$ which is generated by the following
(noncommutative) polynomials
\begin{itemize}

\item[(i)] $a'x - xa', b'x - xe$;

\item[(ii)] $a'a - aa', a'b - ab', b'a - ba', b'b - bb', a'e - ab, b'e - b^2$;

\item[(iii)] $ay - y^2, by - y^2, a'y - y^2, b'y - y^2$;

\item[(iv)] $xy$.

\end{itemize}
Let $A = F/I$ be the corresponding finitely presented graded algebra.
We want to prove that the Hilbert series $\HS(A)$ is a (non-rational) algebraic
function in a constructive way, that is, we want to compute explicitely a polynomial,
with coefficient in the rational function field $\KK(t)$, such that $\HS(A)$ is one
of its roots.

We assume now that the reader is familiar with the theory of noncommutative
\Gr\ bases which are also called \Gr-Shirshov bases. For a complete
reference see, for instance, \cite{BC,Mo,Uf2}. We recall here some basic notations
and results. We fix a monomial ordering of the free associative algebra
$F = \KK\langle Z\rangle$. Then, let $0\neq f = \sum_{i=1}^k c_i w_i\in F$
with $0\neq c_i\in \KK, w_i\in X^*$ and $w_1\succ w_2\succ \ldots w_k$.
The word $\lm(f) = w_1$ is called the {\em leading monomial of $f$}.
A (possibly infinite) subset $U\subset I$ is called a {\em \Gr-Shirshov basis},
briefly a {\em GS-basis of $I$}, if $\lm(U) = \{\lm(f)\mid 0\neq f\in U\}\subset Z^*$
is a monomial basis of the monomial ideal
\[
\LM(I) = \langle \lm(f)\mid 0\neq f\in I \rangle\subset F.
\]
The GS-basis $U$ is said {\em minimal} if the monomial basis $\lm(U)$ is such.
We call $\LM(I)$ the {\em leading monomial ideal of $I$}. If $J = \LM(I)$,
one defines the corresponding monomial algebra $B = F/J$. It is easy to prove
that $\HS(A) = \HS(B)$. Then, for the given algebra $A$, our aim is to prove
that $B$ is a (non-automaton) unambiguous algebra with finite global dimension
in order to apply Theorem \ref{CSth} to the chain languages of $B$ and
compute $\HS(B)$.

It is useful to consider the cf-language $L\subset Y^*$ which is defined by
the cf-grammar $G = (V,Y,P,S)$ where $V = \{S,T\}$ and
\[
P = \{S\to 1\mid T e S, T\to 1\mid a T b T\}.
\]
Recall that the production $T\to 1\mid a T b T$ defines the Dyck language
of all words of balanced brackets $a,b$. We denote this language by $D$.
Clearly, we have that $L = (D e)^*$. Since $D$ is unambiguous, by similar
arguments to the ones in Lemma \ref{ulang}, one obtains that $L$ is also
an unambiguous cf-language.

We fix the graded (left) lexicographic monomial ordering on $F$ with
$a'\succ b'\succ a\succ b\succ e\succ x\succ y$. If $B = F/J$ with $J = \LM(I)$,
observe that $L_1(B) = \lm(U)$ where $U$ is a minimal GS-basis of $I$.

\begin{theorem}
\label{fpex}
With the notations above, we have that $L_1(B)$ is the disjoint union of the
finite set of the leading monomials of the binomials (i)-(iii) with the
unambiguous cf-language $x L y$. It follows that $\gd(A) = \gd(B) = 3$, where
\[
L_2(B) = \{a',b'\} \{a,b\} y\cup \{a',b'\} x L y.
\]
Then, the monomial algebra $B$ is homologically unambiguous and the Hilbert
series $\HS(A) = \HS(B)$ is an algebraic function. Precisely, the Euler
characteristic $1/\HS(A)$ satisfies the quadratic equation
{\small \[
E^2 + (2 t - 1)(5 t^2 - 10 t + 2) E 
+ (2 t - 1)(13 t^5 - 56 t^4 + 85 t^3 - 50 t^2 + 12 t - 1) = 0
\]}
and we have that
\[
\HS(A) = \left( 1 - 7 t + \frac{25}{2} t^2 - 5 t^3 +
t^2\frac{\sqrt{1-4t^2}}{2} \right)^{-1}.
\]
\end{theorem}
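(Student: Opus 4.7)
My plan is to follow the blueprint suggested by the theorem: determine the leading monomial ideal $\LM(I)$ via \Gr-Shirshov basis completion, apply Lemma \ref{ulang} to obtain unambiguity of the chain languages, and then derive $\HS(A)$ from formula (\ref{hilbhom}) together with Theorem \ref{CSth}.

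I would fix the graded left lexicographic ordering $a'\succ b'\succ a\succ b\succ e\succ x\succ y$ and run the GS-completion starting from the $12$ binomial relations in (i)--(iii), whose leading monomials form a finite set $F_1$ of length-two words, together with the monomial $xy$ from (iv). The essential new elements come from overlaps between the rule $b'x\to xe$ and elements already known to lie in $I$ of the form $xwy$: applied to $xy$ this yields $xey\in I$, and iterating the same construction produces a family that, together with the interplay of the expansion rules $a'e\to ab$, $b'e\to bb$ and the prime-commutation rules $a'b\to ab'$, $b'a\to ba'$, gives the collection $\{xwy:w\in L\}$, with $L=(De)^*$. I would prove $xLy\subset\LM(I)$ by induction on $|w|$, exploiting the block decomposition $w=w'de$ with $w'\in L$, $d\in D$. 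For the opposite inclusion I would verify confluence of the ansatz basis $F_1\sqcup\{xwy:w\in L\}$: the key case $b'x\cdot xwy$ produces the S-polynomial $-xewy$ with $ew\in L$ (since $e=1\cdot e\in De\subset L$ and $L$ is closed under concatenation), which belongs to the basis; while an overlap like $a'x\cdot xwy$ reduces to zero by telescoping through (ii)--(iii), as illustrated by $a'xey\to xa'ey\to xaby\to xay^2\to xy^3\to 0$. This gives $L_1(B)=F_1\sqcup xLy$.

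Next, $L_2(B)$ follows from a direct overlap analysis in $L_1(B)$: length-one overlaps inside $F_1$ occur precisely when $w\in\{a'a,a'b,b'a,b'b\}$ meets $w'\in\{ay,by\}$, yielding the four 2-chains $\{a',b'\}\{a,b\}y$; cross-overlaps between $F_1$ and $xLy$ occur only when $w\in\{a'x,b'x\}$ meets the initial $x$ of some $xw_0y$, yielding $\{a',b'\}xLy$; no overlap is possible inside $xLy$ (such words begin with $x$ and end with $y$, and $x\neq y$) nor between $xLy$ and $F_1$ (no 1-chain begins with $y$). Because $L\subset\{a,b,e\}^*$ contains no $y$, distinct $w_0\in L$ never create redundant prechains, so $\bar L_2=\bar L'_2$. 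For $L_3(B)=\emptyset$ I use that every $w\in L_2(B)$ terminates in $y$ while no 1-chain starts with $y$, so the overlap condition for a 3-chain cannot be satisfied; hence $\gd(B)=3$. Unambiguity of $L_1(B)$ and $L_2(B)$ then follows from Lemma \ref{ulang}, using the unambiguity of $L$ via the grammar $G$ given in the setup together with the pairwise language-disjointness of the listed components.

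Finally, Theorem \ref{CSth} applied to $G$ gives $T=1+t^2T^2$ and $\Lambda=1+tT\Lambda$ for $T=\gamma(D)$, $\Lambda=\gamma(L)$, so that $\gamma(L_1)=12t^2+t^2\Lambda$ and $\gamma(L_2)=4t^3+2t^3\Lambda$; formula (\ref{hilbhom}) yields $E:=1/\HS(A)=1-7t+12t^2-4t^3+t^2(1-2t)\Lambda$. Substituting the closed form $\Lambda=\tfrac{1}{2}+\sqrt{1-4t^2}/(2(1-2t))$ simplifies this to $E=1-7t+\tfrac{25}{2}t^2-5t^3+\tfrac{1}{2}t^2\sqrt{1-4t^2}$; isolating the radical and squaring (equivalently, running the lexicographic \Gr\ basis elimination of Theorem \ref{lexgb} on the joint algebraic system in $E,E_1,E_2,\Lambda,T$) produces the stated quadratic in $E$, and the displayed closed form of $\HS(A)$ follows by selecting the root whose power-series expansion has non-negative integer coefficients. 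The main obstacle will be the first step, namely proving that GS-completion produces exactly the leading monomials $xLy$: this is where the Dyck combinatorics encoded in $L$ arises, and it demands a careful confluence analysis to rule out any further leading monomials outside $F_1\sqcup xLy$.
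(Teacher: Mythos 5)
Your proposal follows the same overall strategy as the paper's proof: identify $L_1(B)=\lm(U)$ as the twelve quadratic leading monomials of (i)--(iii) together with $xLy$, $L=(De)^*$, read off $L_2(B)$ and $L_3(B)=\emptyset$ by an overlap analysis, get unambiguity from Lemma \ref{ulang}, and then solve exactly the same algebraic system (your equations in $E,E_1,E_2,\Lambda,T$ coincide with the paper's, and your manipulation of $\Lambda$ and $E$ reproduces the stated quadratic and the closed form). The one genuinely different ingredient is how you certify the Gr\"obner--Shirshov basis: the paper traces the critical-pair completion, shows every new element has the form $xu(a,b,e)y^m$, characterizes the minimal ones as complete reductions of the words $s(a',b')xy$, and pins down $u\in(De)^*$ by a prefix-counting (balance) argument; you instead propose the ansatz basis $F_1\sqcup xLy$ and verify it by membership plus confluence (diamond lemma). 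That route is legitimate and arguably cleaner to organize, and your two key overlap computations are correct: $b'x$ against $xwy$ yields $xewy$ with $ew\in L$, and $a'x$ against $xwy$ yields $xa'wy$, which after pushing the prime through the first Dyck block reduces either to an element $x(ad_1bd_2)e\cdots y\in xLy$ or to zero.

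A few points need tightening. First, your membership induction ``via $w=w'de$'' peels the \emph{last} block, but the available moves act on the left of $w$: prepending $e$ (via $b'x\to xe$) and merging the first two blocks, since pushing $a'$ through sends $d_1ed_2e\cdots$ to $(ad_1bd_2)e\cdots$; so the induction should peel the \emph{first} block using the first-return decomposition of $d_1$, or, as in the paper, realize $xuy$ as the complete reduction of a word $s(a',b')xy$. Second, confluence must also be checked for the overlaps internal to (i)--(iii) (the four words $\{a',b'\}\{a,b\}y$, which do resolve, and which are precisely what produces the finite part of $L_2(B)$); your list of ``key cases'' covers only the overlaps involving $xLy$. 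Third, for $L_3(B)=\emptyset$ the reason ``no 1-chain starts with $y$'' only excludes overlaps of length one; the complete observation is that $y$ occurs only as the final letter of every 1-chain, hence no 1-chain has a proper prefix ending in $y$, while every 2-chain ends in $y$. Fourth, you establish $\gd(B)=3$ but not $\gd(A)=3$, which is part of the statement; the paper obtains $\gd(A)\le\gd(B)$ from the Anick resolution and $\gd(A)\ge 3$ because $\HS(A)$ is not rational. None of these is a fatal flaw; with these repairs your argument goes through and matches the paper's conclusion.
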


\begin{proof}
We start by computing formally a (non-minimal) GS-basis $U'$
of $I$. Note immediately that the leading monomials of the binomials (i)-(iii)
are their leftmost words which implies that such binomials form themselves
a (finite) minimal GS-basis. We show that all other elements of $U'$
are words of type $x u y^m$, for some $u = u(a,b,e)\in Y^* = \{a,b,e\}^*$
and $m\geq 1$. We argue by induction on the (countable) steps of the
critical-pair-and-completion algorithm for computing GS-bases
(see, for instance, \cite{Mo}). 


The basis for the induction consists in the word (iv). Moreover,
at any step of the algorithm, the only new overlappings appear between
the words $x u y^m$ and the leading monomials of (i). This leads to words
of the form $x w(a',b,e) y^m$ where $w(a',b,e)\in \{a',b,e\}^*$. Then,
reductions by (ii) and (iii) eliminate all prime signs in the word $w$,
and finally, reductions by (iii) lead to a word of the desired form $x u(a,b,e) y^m$.
These elements are not reducible by the binomials in $U'$, that is, they
are new elements of $U'$ of the desired form.

Denote now by $M$ the set of all words $x u y^m$ which are irreducible
with respect to all other elements of $U'$. In other words, the minimal
GS-basis $U$ in the union of the binomials (i)-(iii) with $M$.
By considering overlappings, each word $w\in M$ is obtained
as a complete reduction of one of the elements $a'q$ or $b'q$,
where $q$ is another element of $M$. Moreover, both complete reductions
of $a'q$ or $b'q$ are either zero or belong to $M$, for each $q\in M$.
Since $q$ is also obtained by a complete reduction of either $a'q'$
or $b'q'$ for some $q'\in M$, we conclude that $w$ is a complete reduction
of either $a'^2q', a'b'q', b'a'q'$ or $b'^2q'$. Again, for each $q'\in M$,
all such non-zero reductions belong to $M$. By continuing in the same way,
we conclude that the set $M$ is exactly the set of complete reductions
of the words of the form $s x y$, where $s = s(a',b')$ runs over all words
in the letters $a',b'$.

To describe the elements of $M$, let us consider then any word $s = s(a',b')$.
The reductions of $s x y$ via (i) lead to a word $x s' y$, where
$s' = s(a',e)$ (that is, the same word as $s$ where $b'$ is replaced by $e$).
Then, the reductions via (ii) force moving prime signs toward the right,
up to the first appearance of one the letters $y$ or $e$. In the first case,
the letters $a',b'$ jump over $y$ toward the right and then they are
replaced by $y$. In the second case, the prime sign is eliminated with
replacing $e$ by $b$, so that this new $b$ form a balanced pair with
the last unbalanced $a$ or $a'$ before it. Then, the reduction process
leads to a word $x u(a,b,e) y^m$ with no prime signs, where the word $u$
is ended by $e$.

We claim that the word $u$ should have the form $u_1 e u_2 e \cdots u_k e$
where all $u_i = u_i(a,b)$ are balanced subwords, that is, $u_i\in D$ and hence
$u\in L = (D e)^*$. Indeed, each $u_i$ is obtained from some $w_i = w_i(a',b')$
via the reductions by (ii). Then, $w_i$ contains the same amount of $a'$-s
as of $b'$-s since all reductions by (ii) save this equality.

By contradiction, assume now that $w_i$ is not balanced and then some initial
subword of $w_i$ contains less amount of $a'$-s then of $b'$-s. Therefore,
the corresponding prefix $u'$ of $u_i$ contains less amount of prime signs then
the one of $e$-s. Note that each reduction by (ii) cannot decrease the difference
between the number of prime signs and the number of $e$-s in a prefix of any word.
It follows that some letter $e$ in $u'$ will never be involved in the reduction
process. This contradicts the assumption that $u_i$ does not contain $e$.
Thus, $w_i$ is balanced and hence $u\in L$.

Finally, for every $u\in L$ we show that the word $x u y$ belongs to $M$. Indeed,
$x u y$ is a complete reduction of the word $w x y$, where $w$ is the image of $u$
under the substitutions $a \mapsto a'$ and $b,e \mapsto b'$.
It follows that the words of the form $x u y^m$ ($u\in L$) cannot be reduced 
by other elements of $U'$ if and only if $m = 1$. We conclude that $M = x L y$
and the minimal GS-basis $U$ has the desired form.


From the description above of the minimal GS-basis $U\subset I$ and hence
of the 1-chain language $L_1(B) = \lm(U)$, the given formula for $L_2(B)$
follows immediately and we conclude that $\gd(B) = 3$. Then, $B$ is an unambiguous algebra
in accordance with Theorem \ref{uchain}. It is clear how to obtain the unambiguous
cf-grammars generating the chain languages of $B$ and therefore we obtain the following
algebraic system
\[
\left\{
\begin{array}{ccl}
E & = & 1 - 7 t + E_1 - E_2, \\
E_1 & = & 12 t^2 + t^2 S, \\
E_2 & = & 4 t^3 + 2 t^3 S, \\
S & = & t S T + 1, \\
T & = & t^2 T^2 + 1. \\
\end{array}
\right.
\]
Assuming $\Char(\KK) = 0$, we compute the corresponding lexicographic \Gr\ basis
with $E\prec E_1\prec E_2\prec S\prec T$, which is
\begin{equation*}
\begin{gathered}
E^2 + (10 t^3 - 25 t^2 + 14 t - 2) E +
(26 t^6 - 125 t^5 + 226 t^4 \\
- 185 t^3 + 74 t^2 - 14 t + 1),
(2 t - 1) E_1 + E + (- 20 t^3 + 7 t - 1), \\
E_2 - E_1 + E + (7 t - 1),
2 t^3 S - E_2 + 4 t^3, \\
(2 t^4 - t^3) T + (- 2 t^3 + 3 t^2 - t) S + E + (6 t^3 - 15 t^2 + 8 t - 1).
\end{gathered}
\end{equation*}
The roots of the first (univariate) quadratic polynomial are the following
algebraic functions
\[
1 - 7 t + \frac{25}{2} t^2 - 5 t^3 \pm t^2\frac{\sqrt{1 - 4 t^2}}{2}.
\]
The Hilbert series $\HS(B)$ is inverse of one of them, namely
\[
\HS(B) = \left(1 - 7 t + \frac{25}{2} t^2 - 5 t^3 +
t^2\frac{\sqrt{1 - 4 t^2}}{2}\right)^{-1}.
\]
In fact, this function admits the following power series expansion having
the correct coefficients
\[
\HS(B) =
1 + 7 t + 36 t^2  + 166 t^3  + 730 t^4  + 3139 t^5  + 13350 t^6  + 56466 t^7  + \ldots.
\]

Finally, by the Anick resolution \cite{An} one has that $\gd(A)\leq \gd(B) = 3$.
Because $\HS(A)$ is a non-rational function, we conclude that $\gd(A) = 3$.
\end{proof}

Note that the above algebra is a quadratic one. For two quadratic algebras $A$ and $B$
with unambiguous associated monomial algebras, one shows easily that their tensor product
$A\otimes B$, free product $A*B$ and Manin's black dot product $A\bullet B$ have also
unambiguous associated monomial algebras (with respect to suitable monomial orderings).
This gives other examples of finitely presented algebras with algebraic Hilbert series.

Moreover, observe that one can modify the given example to obtain other finitely
presented algebras with (non-rational) algebraic Hilbert series. Consider, for instance,
the algebra $A'$ with generators $a',b',a,b,c,d,e,x,y$ and relations (i),(ii) together
with a modified version of (iii),(iv), namely
\begin{itemize}

\item[(iii)$'$] $a y - y c, by - y d, a'y, b'y$;

\item[(iv)$'$] $x y e$.

\end{itemize}

With respect to the graded lexicographic monomial ordering on the free associative
algebra with $a'\succ b'\succ a\succ b\succ c\succ d\succ e\succ x\succ y$,
denote $B'$ the leading monomial algebra corresponding to $A'$. By similar arguments
to the ones of Theorem \ref{fpex}, one has that $L_1(B')$ is the disjoint union
of the finite set of the leading monomials of the relations (i)-(iii) with the
unambiguous cf-language $x L y D' e$, where $L = (D e)^*$ and $D,D'$ are the Dyck
languages on the alphabets $\{a,b\},\{c,d\}$, respectively. It follows that $B'$
is an unambiguous algebra, $\gd(B') =\gd(A') = 3$ and
\[
L_2(B') = \{ a',b'\} \{ a,b\} y \cup \{a',b' \} x L y D' e.
\]
The corresponding Hilbert series $\HS(B') = \HS(A')$ is the following algebraic
function
\[
\left(1 - 9 t + \frac{23}{2} t^2 - 3 t^3 + t^2\frac{\sqrt{1 - 4 t^2}}{2}\right)^{-1}.
\]
We conclude by recalling that examples of finitely presented quadratic algebras
with intermediate growth and hence transcendental Hilbert series are provided in 
\cite{Kc1,Kc2}.


\section{Conclusions and future directions}

\label{conc}

In this paper, we have shown how to construct and compute with classes
of noncommutative monomial algebras whose Hilbert series are
(non-rational) algebraic functions. Examples of finitely presented graded
algebras whose corresponding leading monomial algebras belong to
the proposed class (and hence possess algebraic Hilbert series)
have also been given. We believe that our results contribute to fill up
the lack of knowledge about algebraic Hilbert series that was in fact
present in the previous literature.

In doing this, we have also established connections between apparently
far worlds as the homology of noncommutative algebras and the theory
of formal grammars of theoretical computer science. Using a similar
interplay between algebra, combinatorics and computer science,
we suggest to study other classes of algebras with more general homologies
from the perspective of their Hilbert series which are possibly
D-finite functions. 

\section{Acknowledgements}

The authors want to express their gratitude to the \singular team for
allowing access to their computational resources in Kaiserslautern.
We would also like to thank Alexandr Podoplelov for sharing his C++ code
of the computer algebra system \bergman \cite{Ba2,CPU}. The author D.P.
is grateful to the University of Bari for its creative atmosphere and
warm hospitality. Finally, we would like to thank the anonymous reviewers
for the careful reading of the manuscript. We have sincerely appreciated
all valuable comments and suggestions as they have improved the readability
of the paper.

\end{document}